\theoremstyle{plain}
\newtheorem{theorem}{Theorem}[section]
\newtheorem{corollary}{Corollary}
\newtheorem{lemma}[theorem]{Lemma}
\newtheorem{proposition}{Proposition}
\theoremstyle{definition}
\newtheorem{definition}[theorem]{Definition}
\newtheorem{remark}{Remark}
\def \dv {\mathrm{div}}
\def \d {\mathrm{d}}
\title[Lipschitz stability for an inverse source problem] %Use the shortened version of the full title
      {Lipschitz stability for an inverse source problem in anisotropic parabolic equations with dynamic boundary conditions}
\author{E. M. Ait Ben Hassi}
\address{E. M. Ait Ben hassi, Cadi Ayyad University, Faculty of Sciences Semlalia, 2390, Marrakesh, Morocco}
\email{m.benhassi@gmail.com}	
\author{S. E. Chorfi}
\address{S. E. Chorfi, Cadi Ayyad University, Faculty of Sciences Semlalia, 2390, Marrakesh, Morocco}
\email{chorphi@gmail.com}	
\author{L. Maniar}
\address{L. Maniar, Cadi Ayyad University, Faculty of Sciences Semlalia, 2390, Marrakesh, Morocco}
\email{maniar@uca.ma}
\author{O. Oukdach}
\address{O. Oukdach, Cadi Ayyad University, Faculty of Sciences Semlalia, 2390, Marrakesh, Morocco}
\email{omar.oukdach@gmail.com}
\subjclass[2010]{Primary: 35R30; Secondary: 35K05.}
 \keywords{Inverse problem, Carleman estimate, Lipschitz stability, dynamic boundary conditions, surface diffusion.}
\begin{document}
\begin{abstract}
In this paper, we study an inverse problem for linear parabolic system with variable diffusion coefficients subject to dynamic boundary conditions. We prove a global Lipschitz stability for the inverse problem involving a simultaneous recovery of two source terms from a single measurement and interior observations, based on a recent Carleman estimate for such problems.
\end{abstract}

\maketitle

%The title of your section 1
\section{Introduction and statement of the problem}
We are interested in the inverse source problem for linear parabolic system with variable diffusion coefficients and dynamic boundary conditions in bounded domains. It consists of recovering two source terms from a single measurement of the temperature at a given time with an additional internal observation on the solution localized in a small region of the physical domain.\\
To introduce the problem, let $T>0$ and $\Omega \subset \mathbb{R}^N$ a bounded domain, $N\geq 2$, with smooth boundary $\Gamma=\partial\Omega$ of class $C^2$, and outer unit normal field $\nu$ on $\Gamma$ be given. We denote $\Omega_T =(0,T)\times \Omega, \quad \omega_T =(0,T)\times \omega, \quad \Gamma_T =(0,T)\times \Gamma$, where $\omega \Subset \Omega$ is a nonempty open subset. Consider the following system
\begin{empheq}[left = \empheqlbrace]{alignat=2}
\begin{aligned}
&\partial_t y - \dv(A(x) \nabla y) + B\cdot \nabla y + p(x)y = F(t,x) &\text{in } \Omega_T , \\
&\partial_t y_{\Gamma} - \dv_{\Gamma} (D(x)\nabla_\Gamma y_{\Gamma}) +\partial_{\nu}^A y + \langle b, \nabla_\Gamma y_{\Gamma} \rangle_{\Gamma} + q(x)y_{\Gamma} = G(t,x) &\text{on } \Gamma_T, \\
&y_{\Gamma}(t,x) = y\rvert_{\Gamma}(t,x) &\text{on } \Gamma_T, \\
&(y,y_{\Gamma})\rvert_{t=0}=(y_0, y_{0,\Gamma})   &\Omega\times\Gamma. \label{eq1to4}
\end{aligned}
\end{empheq}
The initial states are denoted by $(y_0, y_{0,\Gamma})\in L^2(\Omega)\times L^2(\Gamma)$, while the source terms are $F\in L^2(\Omega_T)$ and $G\in L^2(\Gamma_T)$. All the coefficients in system \eqref{eq1to4} are assumed to be bounded,
\begin{equation}
B \in L^\infty(\Omega)^N, \quad p \in L^\infty(\Omega), \quad b\in L^\infty(\Gamma)^N, \; q \in L^\infty(\Gamma). \label{bdcoeff}
\end{equation}
We assume that the diffusion matrices $A$ and $D$ are symmetric and uniformly elliptic, i.e.,
\begin{align}
A=(a_{ij})_{i,j} \in C^1(\overline{\Omega}; \mathbb{R}^{N\times N}), \quad a_{ij}=a_{ji}, \quad 1\leq i,j\leq N, \label{symA}\\
D=(d_{ij})_{i,j} \in C^1(\Gamma; \mathbb{R}^{N\times N}), \quad d_{ij}=d_{ji}, \quad 1\leq i,j\leq N, \label{symD}
\end{align}
and there exists a constant $\beta_0 >0$ such that
\begin{align}
&\langle A(x)\zeta, \zeta\rangle \geq \beta_0 |\zeta|^2, \qquad\qquad x\in \overline{\Omega}, \;\zeta \in \mathbb{R}^N, \label{uellipA}\\
&\langle D(x)\zeta, \zeta \rangle_{\Gamma} \geq \beta_0 |\zeta|_\Gamma^2, \;\quad\qquad x\in \Gamma, \;\zeta \in \mathbb{R}^N, \label{uellipD}
\end{align}
where $\langle \cdot, \cdot\rangle$ is the Euclidean inner product (also denoted by $``\cdot"$) and $\langle \cdot, \cdot\rangle_\Gamma$ is the Riemannian inner product on $\Gamma$ as defined below.\\
We denote $y\rvert_\Gamma$ the trace of $y$. The conormal derivative with respect to $A$ is given by
$$\partial_{\nu}^A y :=(A\nabla y\cdot \nu) \rvert_\Gamma=\sum\limits_{i,j=1}^N a_{ij}(x) (\partial_i y)\rvert_\Gamma\nu_j.$$
For the identity matrix, the normal derivative is $\partial_\nu y:=(\nabla y\cdot \nu) \rvert_\Gamma$.\\
Here, $\dv$ denotes the divergence operator with respect to the space variable in $\Omega$.\\
The boundary $\Gamma$ is considered to be a $(N-1)$-dimensional compact Riemannian submanifold, without boundary. Let $g$ be the Riemannian metric on $\Gamma$ induced by the natural embedding $\Gamma \hookrightarrow \mathbb{R}^N$. We fix a coordinate system $x=(x^j)$ and we denote by $\displaystyle \left(\frac{\partial}{\partial x^j}\right)$ the corresponding tangent vector field. In local coordinates, $g$ is given by $g_{ij}:=\left\langle \dfrac{\partial }{\partial x^i}, \dfrac{\partial }{\partial x^j}\right\rangle$. We define the tangential gradient locally for any smooth function $y$ on $\Gamma$ by
$$\nabla_\Gamma y :=\sum_{i,j=1}^{N-1} g^{ij} \frac{\partial y}{\partial x^j} \frac{\partial }{\partial x^i},$$
where we denote $g=(g_{ij})$, $(g^{ij})$ its inverse and $|g|=\det(g_{ij})$. It is well-known that $\nabla_\Gamma y$ is the projection of the standard Euclidean gradient $\nabla y$ onto the tangent space on $\Gamma$, that is,
\begin{equation}
\nabla_\Gamma y = \nabla y -\langle \nabla y,\nu\rangle \nu. \label{eqtgrad}
\end{equation}
The divergence operator $\dv_\Gamma$ associated with the Riemannian metric $g$ is defined locally as follows
\begin{equation*}
\dv_\Gamma(X)=\frac{1}{\sqrt{|g|}} \sum_{j=1}^{N-1} \frac{\partial}{\partial x^j} \left(\sqrt{|g|}\, X^j\right), \quad X = \sum_{j=1}^{N-1} \frac{\partial}{\partial x^j} X^j. 
\end{equation*}
For any $x\in \Gamma$, the inner product and the norm on the tangent space $T_x \Gamma$ are given by
$$g(X_1, X_2)=\langle X_1, X_2 \rangle_\Gamma =\sum_{i,j=1}^{N-1} g_{ij} X_1^i X_2^j, \qquad |X|_\Gamma=\langle X,X\rangle_\Gamma^{1/2}.$$
Then, the Laplace-Beltrami operator $\Delta_\Gamma$ associated to $g$ is given by
\begin{equation*}
\Delta_\Gamma =\dv_\Gamma(\nabla_\Gamma)=\frac{1}{\sqrt{|g|}} \sum_{i,j=1}^{N-1} \frac{\partial}{\partial x^i} \left(\sqrt{|g|}\, g^{ij} \frac{\partial}{\partial x^j}\right).
\end{equation*}
Since $\Gamma$ is a compact Riemannian manifold without boundary, the following divergence formula holds
\begin{equation}
\int_\Gamma (\dv_\Gamma X)z \,\d S =- \int_\Gamma \langle X, \nabla_\Gamma z \rangle_\Gamma \,\d S, \qquad z\in H^1(\Gamma), \label{sdt}
\end{equation}
where $X$ is any $C^1$ vector field on $\Gamma$ and $\d S$ denotes the surface measure on $\Gamma$.\\
We refer to \cite{Go'06} for the physical interpretation and derivations of the dynamic boundary condition $\eqref{eq1to4}_2$.

A number of authors have studied evolution equations with dynamic boundary conditions from different mathematical aspects, see for instance \cite{CT'17,FG'02,Go'06,MMS'17,MZ'05,VV'11}. The main ingredient to establish various facts in control theory as well as inverse problems is Carleman estimates, which are, roughly speaking, some $L^2$-weighted inequalities estimating the solutions of PDEs in terms of associated differential operators, using large parameters and appropriate weight functions.\\
Recently, Maniar et al. have established a new Carleman estimate \cite{MMS'17} in context of null controllability of system \eqref{eq1to4} with constant diffusion matrices and without drift terms, i.e., $A=dI, D=\delta I$ and $B=b=0$, for some constants $d,\delta >0$. The drift terms case has been recently studied in \cite{KM'19, KMMR'19}.\\

\noindent\textbf{Inverse Source Problem.}\\
Let $T>0$, $t_0 \in (0,T)$, $\displaystyle T_0=\frac{T+t_0}{2}$ and $\mathbb{L}^2:=L^2(\Omega)\times L^2(\Gamma)$.\\
For a given $C_0>0$, we introduce the set of admissible source terms as follows
\begin{align}
\begin{aligned}
\mathcal{S}(C_0) := \{
        (F,G)\in H^1(0,T; \mathbb{L}^2) &\colon |F_t(t,x)| \leq C_0 |F(T_0,x)|, \text{ a.e. }(t,x)\in \Omega_T, \\
       \hspace{3.1cm} \text{ and } &|G_t(t,x)| \leq C_0 |G(T_0,x)|, \text{ a.e. }(t,x)\in \Gamma_T
\}. \label{eqas}
\end{aligned}
\end{align}
Our purpose is to determine the couple of source terms $\mathcal{F}=(F,G)$ in \eqref{eq1to4} belonging to $\mathcal{S}(C_0)$, from a single measurement $Y(T_0, \cdot)=(y,y_{\Gamma})\rvert_{t=T_0}$ and extra partial observation on the first component of the solution, namely, $y\rvert_{(t_0,T)\times\omega}$.\\
We mainly aim to establish a global Lipschitz stability for the source terms in \eqref{eq1to4}. In the above inverse source problem, if we only consider the single measurement of the temperature $Y(T_0, \cdot)=(y,y_{\Gamma})\rvert_{t=T_0}$ as observation, the inverse problem becomes ill-posed in the sense of Hadamard, due to compactness reasons. Hence, the additional observation $y\rvert_{(t_0,T)\times\omega}$ is important to overcome the instability. The internal observation regions required by the Carleman estimate approach are often of the form $\mathcal{O} :=(\{T_0\}\times \Omega) \cup ((t_0,T)\times\omega).$

A new approach with quite realistic observations was introduced in \cite{CR'10} for a uniqueness result in an inverse parabolic problem, where the observation is taken on a single point $x_0 \in \Omega$ of the spacial domain, and any small time interval $(0,t_0)$, that is, $\mathcal{O}_0 := (0,t_0)\times \{x_0\}$. However, up to our knowledge, no stability result was proven by this method. This type of inverse source problems was studied by many researchers in the case of static boundary conditions of Dirichlet, Neumann or Robin types, see for instance \cite{BK'81, IY'98, IS'98, IS'90, KL'92}. For a general review on inverse parabolic problems by Carleman estimate we refer to \cite{YA'09}, and the recent book \cite{BY'17} for inverse hyperbolic problems.

Our result is based on a new  Carleman estimate for the system \eqref{eq1to4}, which extends the one proved in \cite{MMS'17} for the system with standard Laplace and Laplace-Beltrami operators, to a general second order elliptic operators in divergence form; applied with the pioneering idea of applying such estimate to inverse problems, originally proposed by Bukhgeim and Klibanov in \cite{BK'81}. This new approach allows to prove uniqueness and H\"older stability result using a local Carleman estimate. In 1998, Imanuvilov and Yamamoto in \cite{IY'98} adapted the idea of Bukhgeim and Klibanov with global Carleman estimate proved by Fursikov and Imanuvilov in \cite{FI'96}. This permits to improve the H\"older stability to a global Lipschitz stability for inverse source problems with classical boundary conditions in the parabolic case. The aforementioned method has also been successfully applied to degenerate/singular parabolic equations and coupled systems, see e.g., \cite{BFM'16,BFM'14,BFM'14',CTY'10,Va'11}.\\
Our results extend those for parabolic equations with static boundary conditions in \cite{IY'98} to the dynamic boundary condition case.

For applications, if we limit ourselves to the particular but important case where the source terms in \eqref{eq1to4} are given by
\begin{align}
F(t,x) &=f(x)r(t,x), \quad \text{ for all } (t,x)\in \Omega_T, \label{f1}\\
G(t,x) &=g(x)\widetilde{r}(t,x), \quad \text{ for all } (t,x)\in \Gamma_T, \label{f2}
\end{align}
uniqueness and stability results can be established as a direct consequence of our Lipschitz stability result, where the inverse source problem is to determine the couple of spacewise dependent sources $(f,g)$ by the same measurements, provided that the couple of functions $(r,\widetilde{r})$ is known and satisfying some positivity assumption. The couple of functions $(f,g)$ models one special but important case of spatial distributions of source terms arising in several fields of applications such as biology, population dynamics, chemistry, etc.

The rest of the paper is organized as follows: in Section \ref{sec2}, the well-posedness of system \eqref{eq1to4} is discussed, and a special attention is paid to the regularity results, since the Bukhgeim-Klibanov method requires some regularity on the time derivative of the solution, and then we present the Carleman estimate relevant to the system \eqref{eq1to4}. Finally, in Section \ref{sec3}, we apply the Carleman estimate to prove the Lipschitz stability result.

%The title of your section 2
\section{General Framework}\label{sec2}
\subsection{Functional setting}
We denote the Lebesgue measure on $\Omega$ and the surface measure on $\Gamma$ by $\d x$ and $\d S$, respectively. We will use the following real spaces
$$\mathbb{L}^2:=L^2(\Omega, \d x)\times L^2(\Gamma, \d S), \quad \mathbb{L}^2_T:=L^2(\Omega_T)\times L^2(\Gamma_T).$$
$\mathbb{L}^2$ is a real Hilbert space with the corresponding scalar product given by
$$\langle (y,y_\Gamma),(z,z_\Gamma)\rangle_{\mathbb{L}^2} =\langle y,z\rangle_{L^2(\Omega)} +\langle y_\Gamma,z_\Gamma\rangle_{L^2(\Gamma)}.$$
Analogously to $H^k(\Omega)$ and $H^k(\Gamma)$, the usual second order Sobolev spaces over $\Omega$ and $\Gamma$, we consider $$\mathbb{H}^k:=\{(y,y_\Gamma)\in H^k(\Omega)\times H^k(\Gamma)\colon y\rvert_\Gamma =y_\Gamma \}, \text{ for } k=1,2,$$ with the standard norm induced by $H^k(\Omega)\times H^k(\Gamma)$. Recall that $\|y\|_{L^2(\Gamma)}+\|\nabla_\Gamma y\|_{L^2(\Gamma)}$ defines an equivalent norm on $H^1(\Gamma)$. Moreover, $\|y\|_{L^2(\Gamma)}+\|\Delta_\Gamma y\|_{L^2(\Gamma)}$ yields an equivalent norm on $H^2(\Gamma)$.

For the regularity of the solution we introduce the following spaces
$$\mathbb{E}_1(t_0,t_1):=H^1(t_0,t_1 ;\mathbb{L}^2) \cap L^2(t_0,t_1 ;\mathbb{H}^2) \text{  for } t_1 >t_0 \text{ in } \mathbb{R},$$
$$\mathbb{E}_2(t_0,t_1):=H^1(t_0,t_1;\mathbb{H}^2) \cap H^2(t_0,t_1;\mathbb{L}^2) \text{  for } t_1 >t_0 \text{ in } \mathbb{R}.$$
In particular,
$$\mathbb{E}_1 := \mathbb{E}_1(0,T) \;\text{ and }\; \mathbb{E}_2:= \mathbb{E}_2(0,T).$$
For the known part $(r,\widetilde{r})$ of the source term $(F,G)$ in the special form \eqref{f1}-\eqref{f2}, we use the following space
$$\mathcal{C}^{1,0}:=C^{1,0}([0,T]\times\overline{\Omega})\times C^{1,0}([0,T]\times \Gamma),$$
where $C^{1,0}([0,T]\times E)=\{y=y(t,x)\rvert \; y,\partial_t y \in C([0,T]\times E)\}$ for $E=\overline{\Omega} \text{ or } \Gamma$.

We conclude by recalling an important regularity result that we will use in the sequel. Since $\Omega$ is assumed to be of class $C^2$ and $A \in C^1(\overline{\Omega}; \mathbb{R}^{N\times N})$, the elliptic regularity states that: if $y\in H^1(\Omega)$ is such that $\dv (A \nabla y) \in L^2(\Omega)$ and the trace $y\rvert_\Gamma \in H^2(\Gamma)$, then $y \in H^2(\Omega)$, see for instance \cite[Theorem 9.3.3]{Jo'07}. A similar regularity result holds for the elliptic operator on $\Gamma$. If $u\in H^1(\Gamma)$ and $\dv_\Gamma (D\nabla_\Gamma u)\in L^2(\Gamma)$, then $u\in H^2(\Gamma)$, see e.g., \cite[Proposition 1.6]{Ta'11}.

\subsection{Well-posedness and time regularity of the solution}
In this section, we mainly borrow our terminology from \cite{Ou'04}.

The system \eqref{eq1to4} can be written in the following abstract form
\begin{numcases}{\text{(ACP)}\label{acp}}
\hspace{-0.15cm} \partial_t Y=\mathcal{A} Y+ \mathcal{F}, \quad 0<t<T, \nonumber\\
\hspace{-0.15cm} Y(0)=Y_0=(y_0, y_{0,\Gamma}), \nonumber
\end{numcases}
where $Y:=(y,y_{\Gamma})$, $\mathcal{F}=(F,G)$ and the linear operator $$\mathcal{A} \colon D(\mathcal{A}) \subset \mathbb{L}^2 \longrightarrow \mathbb{L}^2$$
given by
\begin{equation}
\mathcal{A}=\begin{pmatrix} \dv(A\nabla)-B\cdot \nabla -p & 0\\ -\partial_\nu^A & \dv_\Gamma(D\nabla_\Gamma) - \langle b, \nabla_\Gamma \rangle_{\Gamma} - q\end{pmatrix}, \quad D(\mathcal{A})=\mathbb{H}^2. \label{E21}
\end{equation}
Following \cite{MMS'17}, we introduce the densely defined bilinear form given by
\begin{align*}
\mathfrak{a}[(y,y_\Gamma),(z,z_\Gamma)]&=\bigintsss_\Omega \left[A(x)\nabla y \cdot \nabla z +  (B(x) \cdot\nabla y)z + p y z \right] \d x \\
& \qquad + \int_\Gamma \left[\langle D(x) \nabla_\Gamma y_\Gamma, \nabla_\Gamma z_\Gamma \rangle_\Gamma + \langle b(x), \nabla_\Gamma y_{\Gamma} \rangle_{\Gamma} z_\Gamma + q y_\Gamma z_\Gamma \right] \d S,
\end{align*}
with form domain $D(\mathfrak{a})=\mathbb{H}^1$ on the Hilbert space $\mathbb{L}^2$.
For  a real number $\mu$, we denote by $\mathfrak{a}+\mu$ the following bilinear form
$$(\mathfrak{a}+\mu)[(y,y_\Gamma),(z,z_\Gamma)]=\mathfrak{a}[(y,y_\Gamma),(z,z_\Gamma)] + \mu \langle (y,y_\Gamma),(z,z_\Gamma) \rangle_{\mathbb{L}^2}.$$
By virtue of \eqref{uellipA}-\eqref{uellipD} and using Cauchy-Schwarz inequality, there is a constant $\mu \in \mathbb{R}$ such that
$$\mathfrak{a}[(y,y_\Gamma),(y,y_\Gamma)]+\mu \|(y,y_\Gamma)\|_{\mathbb{L}^2}^2 \geq \frac{\beta_0}{2} \|(y,y_\Gamma)\|_{\mathbb{H}^1}^2 \quad \text{ for all } (y,y_\Gamma)\in \mathbb{H}^1.$$
Therefore, following \cite{Ou'04} one can check that the form $\mathfrak{a}+\mu$ is densely defined, accretive, continuous and closed. Then, we can associate with the form $\mathfrak{a}$ an operator $\widetilde{\mathcal{A}}$ given by
\begin{align}
D(\widetilde{\mathcal{A}})&:=\{(y,y_\Gamma) \in \mathbb{H}^1, \text{ there exists } (w,w_\Gamma) \in \mathbb{L}^2 \text{ such that } \nonumber\\
&\quad \mathfrak{a}[(y,y_\Gamma),(z,z_\Gamma)]=\langle (w,w_\Gamma), (z,z_\Gamma)\rangle_{\mathbb{L}^2}\text{ for all } (z,z_\Gamma) \in \mathbb{H}^1\}, \label{fo1}\\
\widetilde{\mathcal{A}}(y,y_\Gamma)&:=-(w,w_\Gamma) \qquad \text{ for all }(y,y_\Gamma) \in D(\widetilde{A}). \label{fo2}
\end{align}
It follows from \cite[Theorem 1.52]{Ou'04} that the operator $\widetilde{\mathcal{A}}$ generates an analytic $C_0$-semigroup on $\mathbb{L}^2$.

The generation result for the operator $\mathcal{A}$ in the case of constant diffusion matrices and without drift terms was proved in \cite[Proposition 2.6]{MMS'17}, using a Lemma by Miranville-Zelik \cite{MZ'05}. We refer to \cite{KMMR'19} for a complete study of the wellposedness in the presence of drift terms. Here we prove the result in a slightly different way based on the elliptic regularity stated in Section \ref{sec2}.
\begin{proposition}
The operator $\mathcal{A}$ generates an analytic $C_0$-semigroup $(e^{t\mathcal{A}})_{t\geq 0}$ on $\mathbb{L}^2$.\label{prop1}
\end{proposition}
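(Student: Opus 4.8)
The plan is to prove Proposition~\ref{prop1} by identifying the matrix operator $\mathcal{A}$ from \eqref{E21} with the operator $\widetilde{\mathcal{A}}$ associated with the (non-symmetric, sectorial) form $\mathfrak{a}$ through \eqref{fo1}--\eqref{fo2}. Since it is already recorded above, via \cite[Theorem 1.52]{Ou'04}, that $\widetilde{\mathcal{A}}$ generates an analytic $C_0$-semigroup on $\mathbb{L}^2$, the proposition will follow immediately once the equality $\mathcal{A}=\widetilde{\mathcal{A}}$ (same domain, same action) is established. This equality splits into the two inclusions $D(\mathcal{A})\subset D(\widetilde{\mathcal{A}})$ and $D(\widetilde{\mathcal{A}})\subset D(\mathcal{A})=\mathbb{H}^2$.

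For the first inclusion I would take $(y,y_\Gamma)\in\mathbb{H}^2$ and any $(z,z_\Gamma)\in\mathbb{H}^1$, integrate by parts in $\Omega$ in the bulk part of $\mathfrak{a}$ --- the boundary contribution being exactly $\int_\Gamma(\partial_\nu^A y)z_\Gamma\,\d S$ because $z\rvert_\Gamma=z_\Gamma$ --- and apply the surface divergence formula \eqref{sdt} to the tangential term. This rewrites $\mathfrak{a}[(y,y_\Gamma),(z,z_\Gamma)]$ as $\langle(w,w_\Gamma),(z,z_\Gamma)\rangle_{\mathbb{L}^2}$ with $(w,w_\Gamma)=(-\dv(A\nabla y)+B\cdot\nabla y+py,\ \partial_\nu^A y-\dv_\Gamma(D\nabla_\Gamma y_\Gamma)+\langle b,\nabla_\Gamma y_\Gamma\rangle_\Gamma+qy_\Gamma)\in\mathbb{L}^2$, so $(y,y_\Gamma)\in D(\widetilde{\mathcal{A}})$ and $\widetilde{\mathcal{A}}(y,y_\Gamma)=-(w,w_\Gamma)=\mathcal{A}(y,y_\Gamma)$. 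For the converse inclusion, start from $(y,y_\Gamma)\in D(\widetilde{\mathcal{A}})$, so $(y,y_\Gamma)\in\mathbb{H}^1$ and there is $(w,w_\Gamma)\in\mathbb{L}^2$ with the identity in \eqref{fo1}. Testing first against pairs $(z,0)$ with $z\in\mathcal{D}(\Omega)$ (these lie in $\mathbb{H}^1$ since the trace vanishes) gives $-\dv(A\nabla y)+B\cdot\nabla y+py=w$ in $\mathcal{D}'(\Omega)$, hence $\dv(A\nabla y)\in L^2(\Omega)$. Substituting this back and testing against a general $(z,z_\Gamma)\in\mathbb{H}^1$, the bulk integrals collapse up to a boundary term involving the conormal derivative $\partial_\nu^A y$, leaving a variational identity on $\Gamma$ of the form $\int_\Gamma\langle D\nabla_\Gamma y_\Gamma,\nabla_\Gamma z_\Gamma\rangle_\Gamma\,\d S=\int_\Gamma h\,z_\Gamma\,\d S$ for all $z_\Gamma\in H^1(\Gamma)$, with $h\in L^2(\Gamma)$; this shows $\dv_\Gamma(D\nabla_\Gamma y_\Gamma)\in L^2(\Gamma)$, so the elliptic regularity on $\Gamma$ recalled in Section~\ref{sec2} yields $y_\Gamma\in H^2(\Gamma)$. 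Then $y\in H^1(\Omega)$, $\dv(A\nabla y)\in L^2(\Omega)$ and $y\rvert_\Gamma=y_\Gamma\in H^2(\Gamma)$ put us in the hypotheses of the elliptic regularity in $\Omega$, whence $y\in H^2(\Omega)$ and $(y,y_\Gamma)\in\mathbb{H}^2$. Combining the two inclusions finishes the identification.

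The step I expect to be delicate is the converse inclusion: a priori $y$ is only in $H^1(\Omega)$ with $\dv(A\nabla y)\in L^2(\Omega)$, so the conormal derivative $\partial_\nu^A y$ and the boundary integration by parts used to extract the surface identity must be justified in the $H^{-1/2}(\Gamma)$--$H^{1/2}(\Gamma)$ duality (the Green formula valid precisely under these hypotheses) \emph{before} any $H^2$ information is available, and the regularity must then be bootstrapped in the right order --- first $y_\Gamma\in H^2(\Gamma)$ on the manifold, and only afterwards $y\in H^2(\Omega)$ in the bulk. Once $(y,y_\Gamma)\in\mathbb{H}^2$ the computation of the first step applies and $\widetilde{\mathcal{A}}(y,y_\Gamma)=\mathcal{A}(y,y_\Gamma)$. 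The lower-order drift terms $B\cdot\nabla y$ and $\langle b,\nabla_\Gamma y_\Gamma\rangle_\Gamma$ are harmless throughout, since $B,b\in L^\infty$ and the relevant gradients are in $L^2$; they only affect the shift constant $\mu$ and the non-symmetry of $\mathfrak{a}$, neither of which obstructs analyticity.
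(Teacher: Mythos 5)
Your proposal follows essentially the same route as the paper: identify $\mathcal{A}$ with the form operator $\widetilde{\mathcal{A}}$ by proving both inclusions, using integration by parts and the divergence formula \eqref{sdt} for $D(\mathcal{A})\subseteq D(\widetilde{\mathcal{A}})$, and then bootstrapping regularity in the converse direction --- first $\dv_\Gamma(D\nabla_\Gamma y_\Gamma)\in L^2(\Gamma)$ giving $y_\Gamma\in H^2(\Gamma)$ by elliptic regularity on the manifold, and only then $y\in H^2(\Omega)$ by the elliptic regularity in $\Omega$ recalled in Section~\ref{sec2}. The paper's proof is terser at the point you flag as delicate (it does not spell out the $H^{-1/2}(\Gamma)$--$H^{1/2}(\Gamma)$ duality needed to extract the surface identity before any $H^2$ information is available), but the argument and its order of steps are the same.
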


\begin{proof}
It suffices to prove that $\mathcal{A}=\widetilde{\mathcal{A}}$. Let $(y,y_\Gamma) \in D(\mathcal{A})$ and $(z,z_\Gamma)\in \mathbb{H}^1$. Using integration by parts and divergence formula \eqref{sdt} we obtain
\begin{align*}
\mathfrak{a}[(y,y_\Gamma),(z,z_\Gamma)] &=\int_\Omega \left[-\dv(A(x)\nabla y) z +  (B(x) \cdot\nabla y)z + p y z\right] \d x\\
& + \int_\Gamma [-\dv_\Gamma (D(x)\nabla_\Gamma y_\Gamma) z_\Gamma + \partial_\nu^A y\, z_\Gamma + \langle b(x), \nabla_\Gamma y_{\Gamma} \rangle_{\Gamma}z_\Gamma + q y_\Gamma z_\Gamma] \d S\\
&= \langle -\mathcal{A}(y,y_\Gamma), (z,z_\Gamma)\rangle_{\mathbb{L}^2}.
\end{align*}
Then, $D(\mathcal{A}) \subseteq D(\widetilde{\mathcal{A}})$ and $\widetilde{\mathcal{A}}(y,y_\Gamma)=\mathcal{A}(y,y_\Gamma)$ for all $(y,y_\Gamma) \in D(\mathcal{A})$, i.e., the operator $\widetilde{\mathcal{A}}$ is an extension of $\mathcal{A}$.
In order to prove the converse, let $(y,y_\Gamma) \in D(\widetilde{\mathcal{A}})$. The above calculation implies that $\dv (A \nabla y) \in L^2(\Omega)$ and $\dv_\Gamma(D \nabla y_\Gamma) \in L^2(\Gamma)$. Since $y_\Gamma \in H^1(\Gamma)$, the elliptic regularity on $\Gamma$ yields that $y_\Gamma \in H^2(\Gamma)$, and by the same argument for the operator on $\Omega$, we have $y\in H^2(\Omega)$. Hence, $D(\widetilde{\mathcal{A}}) \subseteq D(\mathcal{A})$. Finally, $\mathcal{A}=\widetilde{\mathcal{A}}$ and $\mathcal{A}$ generates an analytic $C_0$-semigroup on $\mathbb{L}^2$.
\end{proof}
In the sequel, we adopt the following notions of solutions.
\begin{definition}
Let $(F,G) \in \mathbb{L}^2_T$ and $Y_0:=(y_0, y_{0,\Gamma})\in \mathbb{L}^2$.
\begin{enumerate}[label=(\alph*),leftmargin=*]
\item A strong solution of \eqref{eq1to4} is a function $Y:=(y, y_{\Gamma}) \in \mathbb{E}_1$ fulfilling \eqref{eq1to4} in $L^2(0,T; \mathbb{L}^2)$.
\item A mild solution of \eqref{eq1to4} is a function $Y:=(y, y_{\Gamma}) \in C([0,T]; \mathbb{L}^2)$ satisfying, for $t\in [0,T]$,
$$Y(t,\cdot)=e^{t\mathcal{A}} Y_0 + \int_0^t e^{(t-\tau)\mathcal{A}} [F(\tau, \cdot), G(\tau, \cdot)] \,\d\tau.$$
\end{enumerate}
\end{definition}
Since $\mathcal{A}$ generates an analytic $C_0$-semigroup on $\mathbb{L}^2$, the following regularity result holds. See for instance Theorem 3.1 and Proposition 3.8 in \cite{Ben'07}.
\begin{proposition}
Let $F\in L^2(\Omega_T)$ and $G\in L^2(\Gamma_T)$.
\begin{enumerate}[label=(\roman*),leftmargin=*]
\item  For all $Y_0:=(y_0,y_{0,\Gamma})\in \mathbb{H}^1$, there exists a unique strong solution of \eqref{eq1to4} such that $$Y:=(y,y_{\Gamma})\in \mathbb{E}_1 :=H^1(0,T;\mathbb{H}^2) \cap L^2(0,T;\mathbb{L}^2) .$$
\item For all $Y_0:=(y_0,y_{0,\Gamma})\in \mathbb{L}^2$, there exists a unique mild solution of \eqref{eq1to4} $Y:=(y,y_{\Gamma})\in C([0,T]; \mathbb{L}^2)$ such that for all $\tau \in (0,T)$, $$Y\in \mathbb{E}_1(\tau, T):=H^1(\tau,T;\mathbb{H}^2) \cap L^2(\tau,T;\mathbb{L}^2).$$
\end{enumerate}
Moreover, if $\mathcal{F}=(F,G)\in H^1(0,T; \mathbb{L}^2)$, then for all $\tau \in (0,T)$, we have
$$Y\in \mathbb{E}_2(\tau, T):=H^1(\tau,T;\mathbb{H}^2) \cap H^2(\tau,T;\mathbb{L}^2).$$
\label{prop2}
\end{proposition}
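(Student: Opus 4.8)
The plan is to derive everything from two facts: the maximal $L^2$-regularity of the analytic semigroup generated by $\mathcal{A}$ on the Hilbert space $\mathbb{L}^2$ (Proposition~\ref{prop1}), and an elementary bootstrap in the time variable. Parts (i) and (ii) require essentially no new argument. Since $\mathcal{A}$ is $m$-sectorial and is the operator associated with the closed, densely defined form $\mathfrak{a}$ whose form domain is $\mathbb{H}^1$, it enjoys maximal regularity on $\mathbb{L}^2$ and the trace space of $H^1(0,T;\mathbb{L}^2)\cap L^2(0,T;D(\mathcal{A}))$ at $t=0$ is exactly the form domain $\mathbb{H}^1$; since $D(\mathcal{A})=\mathbb{H}^2$ by Proposition~\ref{prop1}, this gives (i), with the unique strong solution lying in $\mathbb{E}_1=H^1(0,T;\mathbb{L}^2)\cap L^2(0,T;\mathbb{H}^2)$ (we note that the formula for $\mathbb{E}_1$ re-stated inside the proposition has its two factors transposed; the definition from Section~\ref{sec2} is the one that holds). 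Uniqueness is that of the mild solution. For (ii) with only $Y_0\in\mathbb{L}^2$, the mild solution lies in $C([0,T];\mathbb{L}^2)$ by the standard theory, and the analytic smoothing of $e^{t\mathcal{A}}$ together with the maximal-regularity bound on the Duhamel term $\int_0^{t}e^{(t-s)\mathcal{A}}\mathcal{F}(s)\,\d s$ — which already belongs to $C([0,T];\mathbb{H}^1)$ — shows $Y(\tau_0)\in\mathbb{H}^1$ for every $\tau_0\in(0,T)$. Restarting the evolution at $t=\tau_0$ and invoking (i) on $(\tau_0,T)$ then yields $Y\in\mathbb{E}_1(\tau_0,T)\subset\mathbb{E}_1(\tau,T)$ for any $\tau\in(\tau_0,T)$. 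This is precisely the content of \cite[Theorem~3.1 and Proposition~3.8]{Ben'07}.

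For the last assertion I would argue as follows. Assume in addition that $\mathcal{F}=(F,G)\in H^1(0,T;\mathbb{L}^2)$, so in particular $\mathcal{F}\in C([0,T];\mathbb{L}^2)$. Fix $\tau\in(0,T)$ and choose $\tau_2\in(0,\tau)$. By (ii) and the analytic smoothing, $Y(\tau_2)\in D(\mathcal{A})=\mathbb{H}^2$, so on $(\tau_2,T)$ we may write
\[
Y(t)=e^{(t-\tau_2)\mathcal{A}}Y(\tau_2)+\int_{\tau_2}^{t}e^{(t-s)\mathcal{A}}\mathcal{F}(s)\,\d s .
\]
Differentiating in $t$ — the first term is differentiable because $Y(\tau_2)\in D(\mathcal{A})$, and the Duhamel term is handled by the substitution $s\mapsto t-s$ combined with $\mathcal{F}\in H^1$ — shows that $Z:=\partial_t Y$ is the mild solution on $(\tau_2,T)$ of
\[
\partial_t Z=\mathcal{A}Z+\mathcal{F}'(t),\qquad Z(\tau_2)=\mathcal{A}Y(\tau_2)+\mathcal{F}(\tau_2)\in\mathbb{L}^2 .
\]
Since $\mathcal{F}'\in L^2(\tau_2,T;\mathbb{L}^2)$ and $Z(\tau_2)\in\mathbb{L}^2$, part (ii) applied to this Cauchy problem on $(\tau_2,T)$ gives $Z\in\mathbb{E}_1(\tau,T)=H^1(\tau,T;\mathbb{L}^2)\cap L^2(\tau,T;\mathbb{H}^2)$, i.e. $\partial_t Y\in H^1(\tau,T;\mathbb{L}^2)\cap L^2(\tau,T;\mathbb{H}^2)$. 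Equivalently $Y\in H^2(\tau,T;\mathbb{L}^2)\cap H^1(\tau,T;\mathbb{H}^2)=\mathbb{E}_2(\tau,T)$, which is the desired conclusion.

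The main obstacle I expect is this last bootstrap step: one must justify rigorously that the mild solution is differentiable in $t$ on $(\tau_2,T)$ and that its derivative is itself the mild solution of the differentiated Cauchy problem with the stated initial value. This is where the two extra pieces of information are used — $Y(\tau_2)\in D(\mathcal{A})$, supplied by the parabolic smoothing in (ii), and $\mathcal{F}\in H^1$, which makes the derivative of the convolution integrable — and with both at hand it reduces to the routine differentiation of the Duhamel formula recalled above. Beyond the generation result of Proposition~\ref{prop1} and maximal $L^2$-regularity on $\mathbb{L}^2$, no structure specific to the dynamic boundary condition (and, of course, no Carleman estimate) is needed here.
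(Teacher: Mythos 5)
Your proposal is correct, and it follows the same route the paper intends: the paper offers no proof of this proposition at all, deferring entirely to \cite[Theorem~3.1 and Proposition~3.8]{Ben'07}, and your argument (maximal $L^2$-regularity of the analytic semigroup from Proposition~\ref{prop1}, identification of the time-trace space with the form domain $\mathbb{H}^1$, parabolic smoothing to restart at $\tau_2$ with $Y(\tau_2)\in D(\mathcal{A})$, and differentiation of the Duhamel formula to bootstrap to $\mathbb{E}_2(\tau,T)$) is precisely the standard argument underlying that citation. You are also right that the displayed formula for $\mathbb{E}_1$ inside the proposition transposes the two factors relative to the definition in Section~\ref{sec2}.
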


\subsection{Carleman estimate}
To state and prove our Carleman estimate, we need a weight function with special properties. The existence of such function is proved in \cite{FI'96}.
\begin{lemma}
Let $\omega' \Subset \Omega$ be a nonempty open subset. Then there is a function $\eta^0\in C^2(\overline{\Omega})$ such that       
\begin{equation*}
\eta^0> 0  \quad \text{ in } \Omega, \qquad \eta^0=0 \quad \text{ on } \Gamma, \qquad |\nabla\eta^0|  > 0 \quad\text{ in } \overline{\Omega\backslash\omega'}.
\end{equation*}
Moreover, the identity $|\nabla\eta^0|^2=|\nabla_\Gamma \eta^0|^2+|\partial_\nu \eta^0|^2$ on $\Gamma$ implies
\begin{equation}
\nabla_\Gamma \eta^0 =0, \qquad |\nabla \eta^0|=|\partial_\nu \eta^0|, \qquad \partial_\nu \eta^0 \leq -c <0 \quad\text{ on } \Gamma \label{eqlm1}
\end{equation}
for some constant $c>0$.
\label{lm1}
\end{lemma}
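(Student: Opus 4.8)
The plan is to separate the statement into its two parts: the genuinely geometric construction of a function on $\overline{\Omega}$ whose gradient vanishes only inside the prescribed set $\omega'$, which is precisely \cite[Lemma 1.1]{FI'96}, and the boundary relations \eqref{eqlm1}, which I would deduce from the normalization $\eta^0=0$ on $\Gamma$ together with the orthogonal decomposition \eqref{eqtgrad}.

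\emph{Step 1: construction of $\eta^0$.} Fix an auxiliary open set $\omega''\Subset\omega'$. First I would produce $\tilde\eta\in C^2(\overline{\Omega})$ with $\tilde\eta>0$ in $\Omega$, $\tilde\eta=0$ and $\partial_\nu\tilde\eta<0$ on $\Gamma$: since $\Gamma$ is of class $C^2$, the distance function $x\mapsto\mathrm{dist}(x,\Gamma)$ is $C^2$ on a collar neighbourhood of $\Gamma$ in $\overline{\Omega}$ and satisfies these conditions there, so one glues it to a strictly positive interior bump function. Such a $\tilde\eta$ already has $|\nabla\tilde\eta|>0$ on a neighbourhood $V$ of $\Gamma$, but may have critical points in $\Omega\setminus V$. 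Perturbing $\tilde\eta$ by $\varepsilon\chi\rho$, where $\chi\in C_c^\infty(\Omega)$ equals $1$ on $\overline{\Omega}\setminus V$ and $\rho$ is a generic $C^2$ function, one obtains for small $\varepsilon$ a function $\eta_1\in C^2(\overline{\Omega})$ that still vanishes on $\Gamma$, is positive in $\Omega$, coincides with $\tilde\eta$ near $\Gamma$ (so $\partial_\nu\eta_1<0$ and no critical points near $\Gamma$), and is a Morse function; hence its critical set reduces to finitely many points $x_1,\dots,x_m\in\Omega$. Since $\Omega$ is connected, there is a $C^2$ diffeomorphism $\Phi$ of $\overline{\Omega}$, equal to the identity near $\Gamma$, with $\Phi(x_j)\in\omega''$ for every $j$ (transitivity of the compactly supported diffeomorphism group of a connected manifold on finite point configurations). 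Then $\eta^0:=\eta_1\circ\Phi^{-1}$ belongs to $C^2(\overline{\Omega})$, is positive in $\Omega$, vanishes on $\Gamma$, and its only critical points are the $\Phi(x_j)\in\omega''\Subset\omega'$, so $|\nabla\eta^0|>0$ on $\overline{\Omega\setminus\omega'}$.

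\emph{Step 2: the boundary relations \eqref{eqlm1}.} Since $\omega'\Subset\Omega$ we have $\Gamma\subset\overline{\Omega\setminus\omega'}$, hence $|\nabla\eta^0|>0$ on $\Gamma$ by Step 1. Because $\eta^0\equiv0$ on $\Gamma$, its tangential derivative along $\Gamma$ vanishes, that is $\nabla_\Gamma\eta^0=0$ on $\Gamma$; then the decomposition \eqref{eqtgrad}, $\nabla\eta^0=\nabla_\Gamma\eta^0+\langle\nabla\eta^0,\nu\rangle\nu$, gives upon taking norms $|\nabla\eta^0|^2=|\nabla_\Gamma\eta^0|^2+|\partial_\nu\eta^0|^2=|\partial_\nu\eta^0|^2$ on $\Gamma$, so $\partial_\nu\eta^0\neq0$ on $\Gamma$. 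Finally, for $x\in\Gamma$ and small $h>0$, $\eta^0(x-h\nu(x))>0=\eta^0(x)$ forces $\partial_\nu\eta^0(x)=\lim_{h\to0^+}h^{-1}\bigl(\eta^0(x)-\eta^0(x-h\nu(x))\bigr)\le0$; combined with $\partial_\nu\eta^0\neq0$ this yields $\partial_\nu\eta^0<0$ on $\Gamma$, and by continuity and compactness of $\Gamma$ there is $c>0$ with $\partial_\nu\eta^0\le-c<0$ on $\Gamma$.

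\emph{Main obstacle.} The substantive difficulty is entirely in Step 1: eliminating the interior critical points while preserving both $\eta^0\in C^2(\overline{\Omega})$ and the normalization $\eta^0=0$, $\partial_\nu\eta^0<0$ near $\Gamma$. This is exactly the content of \cite[Lemma 1.1]{FI'96}, so in the write-up I would either invoke that lemma directly (applied with $\omega''$ in place of its observation set) or carry out the Morse-function-plus-diffeomorphism argument above, which uses only connectedness of $\Omega$. Granting such an $\eta^0$, Step 2 is a one-line computation from \eqref{eqtgrad}.
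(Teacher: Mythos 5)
Your proposal is correct and follows essentially the same route as the paper: the paper simply cites \cite{FI'96} for the existence of $\eta^0$ (whose standard proof is the Morse-function-plus-diffeomorphism argument you sketch) and derives \eqref{eqlm1} exactly as in your Step 2, from $\eta^0\rvert_\Gamma=0$, the decomposition \eqref{eqtgrad}, the sign of $\eta^0$ in $\Omega$, and compactness of $\Gamma$.
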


\begin{remark}
The identity $(\partial_\nu \psi)^2=|\nabla \psi|^2-|\nabla_\Gamma \psi|^2$ and the property $\partial_\nu \eta^0 <-c<0$ have played important roles in the proof of Carleman estimate with standard Laplacians in \cite{MMS'17}. Since we deal with general elliptic second order operators, we need similar properties with the conormal derivative instead of the normal derivative. This is the purpose of the following lemma.
\end{remark}

\begin{lemma} Let $\psi$ be any smooth function.
\begin{enumerate}[label=(\roman*)]
\item The following identity holds
\begin{equation}
(\partial_\nu^A \psi)^2 -(A\nabla_\Gamma \psi\cdot \nu)^2=|A^{\frac{1}{2}}\nu|^2 \left(|A^{\frac{1}{2}}\nabla \psi|^2-|A^{\frac{1}{2}}\nabla_\Gamma \psi|^2 \right). \label{eqconormal}
\end{equation}
\item Let $c$ be the same constant in \eqref{eqlm1}. Then
\begin{equation}
\partial_\nu^A \eta^0 \leq \beta_0 \partial_\nu \eta^0 \leq -c \beta_0 <0. \label{ceqlm1}
\end{equation}
\end{enumerate}
\end{lemma}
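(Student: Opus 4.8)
The plan is to establish (i) by expanding everything in the Euclidean orthogonal splitting of the gradient on $\Gamma$, and then to deduce (ii) immediately from (i) together with the special properties of $\eta^0$ recorded in Lemma~\ref{lm1}.

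For (i), the starting point is the decomposition \eqref{eqtgrad}, which on $\Gamma$ reads $\nabla\psi=\nabla_\Gamma\psi+(\partial_\nu\psi)\nu$ with $\nabla_\Gamma\psi\perp\nu$ in the Euclidean sense. I would set $a:=A\nabla_\Gamma\psi\cdot\nu=A\nu\cdot\nabla_\Gamma\psi$ (using the symmetry of $A$), $s:=\partial_\nu\psi$, and $m:=A\nu\cdot\nu=|A^{1/2}\nu|^2>0$. Applying $A$ to the decomposition and pairing with $\nu$ gives $\partial_\nu^A\psi=A\nabla\psi\cdot\nu=a+sm$, whence
\[
(\partial_\nu^A\psi)^2-(A\nabla_\Gamma\psi\cdot\nu)^2=(a+sm)^2-a^2=m\,(2as+s^2m).
\]
On the other hand, expanding $|A^{1/2}\nabla\psi|^2=A\nabla\psi\cdot\nabla\psi$ along the same decomposition and again using the symmetry of $A$ yields $|A^{1/2}\nabla\psi|^2=|A^{1/2}\nabla_\Gamma\psi|^2+2as+s^2m$, so that $|A^{1/2}\nabla\psi|^2-|A^{1/2}\nabla_\Gamma\psi|^2=2as+s^2m$. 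Multiplying this by $m$ and comparing with the previous display gives exactly \eqref{eqconormal}.

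For (ii), I would apply \eqref{eqconormal} with $\psi=\eta^0$ and invoke $\nabla_\Gamma\eta^0=0$ on $\Gamma$ from \eqref{eqlm1}: then both $A\nabla_\Gamma\eta^0\cdot\nu$ and $|A^{1/2}\nabla_\Gamma\eta^0|^2$ vanish while $\nabla\eta^0=(\partial_\nu\eta^0)\nu$, so \eqref{eqconormal} collapses to $(\partial_\nu^A\eta^0)^2=|A^{1/2}\nu|^4(\partial_\nu\eta^0)^2$. The sign is fixed directly from $\partial_\nu^A\eta^0=A\nabla\eta^0\cdot\nu=(\partial_\nu\eta^0)\,|A^{1/2}\nu|^2$. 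Finally, uniform ellipticity \eqref{uellipA} with $|\nu|=1$ gives $|A^{1/2}\nu|^2\ge\beta_0$; since $\partial_\nu\eta^0\le-c<0$, multiplying this inequality by the negative number $\partial_\nu\eta^0$ reverses it, yielding $\partial_\nu^A\eta^0=(\partial_\nu\eta^0)|A^{1/2}\nu|^2\le\beta_0\,\partial_\nu\eta^0\le-c\beta_0<0$, which is \eqref{ceqlm1}. There is no real obstacle here: the computation is a routine algebraic expansion, and the only point needing care is the sign bookkeeping in (ii), namely that multiplying $|A^{1/2}\nu|^2\ge\beta_0$ by the negative quantity $\partial_\nu\eta^0$ flips the inequality.
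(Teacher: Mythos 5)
Your proposal is correct and follows essentially the same route as the paper: both parts rest on the decomposition \eqref{eqtgrad}, the identity $\partial_\nu^A\psi=A\nabla_\Gamma\psi\cdot\nu+(\partial_\nu\psi)(A\nu\cdot\nu)$, and the symmetry of $A$, with your direct expansion of $(a+sm)^2-a^2$ being just a cleaner presentation of the paper's ``completing the square'' step. The sign bookkeeping in (ii) likewise matches the paper's use of $\nabla\eta^0=(\partial_\nu\eta^0)\nu$, \eqref{uellipA}, and \eqref{eqlm1}.
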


\begin{proof}
$(i)$ Using the identity \eqref{eqtgrad} we obtain
\begin{align}
A\nabla \psi &=A\nabla_\Gamma \psi +(\partial_\nu \psi)A\nu \label{gd1}.\\
\nabla \psi &=\nabla_\Gamma \psi +(\partial_\nu \psi)\nu. \label{gd2}
\end{align}
Composing \eqref{gd1} by $\nu$ yields the following identity
\begin{equation*}
\partial_\nu^A \psi= A\nabla_\Gamma \psi\cdot \nu + (\partial_\nu \psi) (A\nu\cdot \nu).
\end{equation*}
By taking the scalar product of \eqref{gd1} and \eqref{gd2} with multiplication of the resulting identity by $(A\nu\cdot \nu)$ we infer that
$$(A\nu\cdot \nu) (A\nabla\psi\cdot \nabla\psi -A\nabla_\Gamma\psi\cdot \nabla_\Gamma\psi)=(\partial_\nu \psi)^2 (A\nu\cdot \nu)^2 +  2 (\partial_\nu \psi) (A\nu\cdot \nu) A\nabla_\Gamma \psi\cdot \nu,$$
where we used the symmetry of $A$. Completing the square yields the result.\\
$(ii)$ Since $\eta^0 \rvert_\Gamma=0$, we obtain
\begin{equation}
\nabla \eta^0 =(\partial_\nu \eta^0) \nu\qquad \text{ on } \Gamma. \label{eqgn}
\end{equation}
Then, $\partial_\nu^A \eta^0 = (A\nu\cdot\nu)\partial_\nu \eta^0$. Hence, by virtue of \eqref{uellipA} and \eqref{eqlm1},  we have
\begin{equation*}
\partial_\nu^A \eta^0 \leq \beta_0 \partial_\nu \eta^0 \leq -c \beta_0 <0.
\end{equation*}
\end{proof}

\begin{remark}
In the isotropic case, i.e., $A(x)=I$, since $\nabla_\Gamma \psi\cdot \nu=0$, the identity \eqref{eqconormal} is simply the same as $(\partial_\nu \psi)^2=|\nabla \psi|^2 - |\nabla_{\Gamma} \psi|^2$.
\end{remark}
We introduce the following weight functions
\begin{equation}
\alpha(t,x)=\frac{e^{2\lambda \|\eta^0\|_\infty}- e^{\lambda \eta^0(x)}}{t(T -t)} \quad \text{ and } \quad \xi(t,x)=\frac{e^{\lambda \eta^0(x)}}{t(T -t)} \label{w1}
\end{equation}
for all $(t,x)\in \overline{\Omega}_T$, and $\lambda \geq 1$ is a parameter (to fix later) which depends only on $\Omega$ and $\omega$.
Note that $\alpha$ and $\xi$ are of class $C^2$, strictly positive on $\overline{\Omega}_T$ and blow up as $t\to 0$ and as $t\to T$, and we have $$|\partial_t \alpha| \leq CT\xi^2 \quad\text{ and }\quad \xi \leq T^2 \xi^2.$$
Furthermore,
\begin{equation}
\nabla_\Gamma \alpha =0 \quad \text{ and } \quad \nabla_\Gamma \xi =0 \quad \text{ on }\Gamma. \label{2eq3.2}
\end{equation}
We notice that, for fixed $x\in \Omega$, $\alpha(\cdot, x)$ attains the minimum in $(0,T)$ at $\displaystyle \frac{T}{2}$.\\ Consider
$$L z=\partial_t z -\dv (A(x) \nabla z) + B(x)\cdot \nabla z + p(x)z, \qquad (t,x)\in \Omega_T,$$
and
$$L_\Gamma z_\Gamma=\partial_t z_\Gamma -\dv_\Gamma (D(x)\nabla_\Gamma z_\Gamma) + \partial_\nu^A z + \langle b(x), \nabla_\Gamma z_{\Gamma} \rangle_{\Gamma} +q(x)z_{\Gamma}, \qquad (t,x)\in \Gamma_T.$$
The following lemma is the key tool to prove the main result on global Lipschitz stability in our inverse source problem.
\begin{lemma}[Carleman estimate]
Let $T>0$, $\omega\Subset \Omega$ be nonempty and open subset. Consider $\eta^0$, $\alpha$ and $\xi$ as above with respect to a nonempty open set $\omega' \Subset \omega$. Then there are three positive constants $\lambda_1,s_1 \geq 1$ and $C>0$ such that, for any $\lambda\geq \lambda_1$ and $s\geq s_1$, the following inequality holds
\begin{small}
\begin{align}
& \bigintssss_{\Omega_T} \left(\frac{1}{s\xi} \left(|\partial_t z|^2 + |\dv(A\nabla z)|^2 \right)+ s\lambda^2 \xi |\nabla z|^2 + s^3\lambda^4\xi^3 |z|^2 \right)e^{-2s\alpha} \,\d x\,\d t \quad +  \nonumber\\
& \int_{\Gamma_T} \left(\frac{1}{s\xi} (|\partial_t z_\Gamma|^2 +|\dv(D\nabla_\Gamma z_\Gamma)|^2)+s\lambda \xi |\nabla_\Gamma z_\Gamma|^2 + s^3\lambda^3\xi^3 |z_\Gamma|^2 + s\lambda \xi |\partial_\nu^A z|^2\right)e^{-2s\alpha} \,\d S\,\d t \nonumber\\
& \quad\leq   C s^3\lambda^4\int_{\omega_T} e^{-2s\alpha} \xi^3 |z|^2 \,\d x\,\d t +C \int_{\Omega_T}e^{-2s\alpha}|Lz|^2 \,\d x\,\d t + C \int_{\Gamma_T}e^{-2s\alpha}|L_\Gamma z_\Gamma|^2 \,\d S\,\d t\label{car1}
\end{align}
\end{small}
for all $(z,z_\Gamma)\in \mathbb{E}_1$. Given $K>0$, the constant $C=C(K)$ can be chosen independently of all potentials $p$ and $q$ such that $\|p\|_\infty, \|q\|_\infty \leq K$.\label{lm2}
\end{lemma}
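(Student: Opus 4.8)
The plan is to run the Fursikov--Imanuvilov scheme for global parabolic Carleman estimates in the coupled bulk/surface form of \cite{MMS'17}, the genuinely new point being the anisotropic second order operators, which forces the elementary identity $(\partial_\nu\psi)^2=|\nabla\psi|^2-|\nabla_\Gamma\psi|^2$ to be replaced by the conormal identity \eqref{eqconormal} together with the sign property \eqref{ceqlm1}. I would first reduce to the principal part: since the left-hand side of \eqref{car1} already contains $s\lambda^2\xi|\nabla z|^2e^{-2s\alpha}$ and $s^3\lambda^4\xi^3|z|^2e^{-2s\alpha}$ over $\Omega_T$ (and $s\lambda\xi|\nabla_\Gamma z_\Gamma|^2e^{-2s\alpha}$, $s^3\lambda^3\xi^3|z_\Gamma|^2e^{-2s\alpha}$ over $\Gamma_T$), and $\xi$ is bounded below by a positive multiple of $T^{-2}$, for $s,\lambda$ large the terms $|B\cdot\nabla z|^2\le C|\nabla z|^2$, $|pz|^2\le K^2|z|^2$, $|\langle b,\nabla_\Gamma z_\Gamma\rangle_\Gamma|^2$ and $|qz_\Gamma|^2$ can be moved from the right-hand side to the left, which is precisely the source of the uniformity $C=C(K)$. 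Hence it suffices to prove \eqref{car1} with $L$, $L_\Gamma$ replaced by the principal operators $Pz:=\partial_t z-\dv(A\nabla z)$ and $P_\Gamma z_\Gamma:=\partial_t z_\Gamma-\dv_\Gamma(D\nabla_\Gamma z_\Gamma)+\partial_\nu^A z$, and --- by density, via Proposition \ref{prop2} --- to work with sufficiently smooth $(z,z_\Gamma)$.

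Next, setting $w=e^{-s\alpha}z$, $w_\Gamma=e^{-s\alpha}z_\Gamma$ and computing $e^{-s\alpha}P(e^{s\alpha}w)$ and $e^{-s\alpha}P_\Gamma(e^{s\alpha}w_\Gamma)$, one uses \eqref{2eq3.2} (the tangential gradients of $\alpha$, $\xi$ vanish on $\Gamma$) to split the conjugated operators into a formally self-adjoint and a formally skew-adjoint part, $P_sw=P_1w+P_2w$ with leading terms $P_1w\approx-\dv(A\nabla w)-s^2\lambda^2\xi^2(A\nabla\eta^0\cdot\nabla\eta^0)w$ and $P_2w\approx\partial_t w+2s\lambda\xi\,A\nabla\eta^0\cdot\nabla w$, and analogously on $\Gamma$. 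Squaring and integrating yields $\|P_1w\|^2+\|P_2w\|^2+2(P_1w,P_2w)=\|e^{-s\alpha}Pz\|^2+\cdots$ over $\Omega_T$ and its counterpart over $\Gamma_T$.

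The core is the cross term $2(P_1w,P_2w)_{L^2(\Omega_T)}$: integrating by parts in $t$ and $x$ produces the dominant positive bulk quantities $s^3\lambda^4\int_{\Omega_T}\xi^3(A\nabla\eta^0\cdot\nabla\eta^0)^2|w|^2$ and $s\lambda^2\int_{\Omega_T}\xi|A^{1/2}\nabla w|^2$ (up to terms absorbable for $\lambda$ large), together with a batch of boundary integrals over $\Gamma_T$. I expect this to be the main obstacle: one must bookkeep all these boundary integrals and show that, once combined with the corresponding computation for the surface operator $P_{\Gamma,s}w_\Gamma$ --- which again uses $\nabla_\Gamma\alpha=0$ and now \eqref{eqconormal} to express $(\partial_\nu^A w)^2$ through $|A^{1/2}\nabla w|^2$ --- they close up with the right sign. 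This is precisely where $\nabla\eta^0=(\partial_\nu\eta^0)\nu$ on $\Gamma$ and the strict negativity $\partial_\nu^A\eta^0\le-c\beta_0<0$ of \eqref{ceqlm1} are indispensable: they make the leading boundary term favorable, let the coupling term $\partial_\nu^A z$ from $P_\Gamma$ be absorbed, and in fact generate the extra left-hand term $s\lambda\int_{\Gamma_T}\xi|\partial_\nu^A w|^2$ alongside $s^3\lambda^3\int_{\Gamma_T}\xi^3|w_\Gamma|^2$ and $s\lambda\int_{\Gamma_T}\xi|\nabla_\Gamma w_\Gamma|^2$.

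It then remains to localize and to restore the remaining terms. Since $|\nabla\eta^0|\ge c'>0$ on $\overline{\Omega\setminus\omega'}$, the coefficient $A\nabla\eta^0\cdot\nabla\eta^0$ is bounded below there, while over $\omega'$ one controls $s\lambda^2\int_{\omega'}\xi|\nabla w|^2$ by a Caccioppoli-type integration by parts against a cutoff $\chi\in C_c^\infty(\omega)$ with $\chi\equiv1$ on $\omega'$, at the price of $\delta s^3\lambda^4\int_{\omega_T}\xi^3|w|^2+C_\delta s\lambda^4\int_{\omega_T}\xi|w|^2$; choosing first $\lambda=\lambda_1$ and then $s\ge s_1$ large absorbs all remaining errors and leaves the observation term $s^3\lambda^4\int_{\omega_T}\xi^3|w|^2$. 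Writing $\partial_t w$ and $\dv(A\nabla w)$ in terms of $P_1w$, $P_2w$ and quantities already estimated supplies the weighted terms $\tfrac1{s\xi}(|\partial_t w|^2+|\dv(A\nabla w)|^2)$ and the surface analogue, and finally undoing $w=e^{-s\alpha}z$ --- using $|\partial_t\alpha|\le CT\xi^2$ and $\xi\le T^2\xi^2$ to dominate the commutator terms --- gives \eqref{car1} for all $(z,z_\Gamma)\in\mathbb{E}_1$.
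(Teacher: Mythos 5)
Your proposal follows essentially the same route as the paper: the same reduction to the principal parts, the same conjugation $\psi=e^{-s\alpha}z$ with a self-adjoint/skew-adjoint splitting (the paper's $M_1,M_2$ and $N_1,N_2$), and the same two new ingredients --- the conormal identity \eqref{eqconormal} and the sign property $\partial_\nu^A\eta^0\le -c\beta_0<0$ of \eqref{ceqlm1} --- to make the leading boundary terms favorable and to absorb the coupling term $\partial_\nu^A z$. The one step you state as an expectation rather than verify, namely that the boundary integrals generated by the bulk and surface cross terms close up with the right sign, is exactly the bookkeeping the paper performs in Steps 2b--2d and Step 3 (the terms $J$, $I_1$--$I_4$ and the cancellation of $\int_{\Gamma_T}\partial_t\psi\,\partial_\nu^A\psi$), and it works out as you predict.
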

This Carleman estimate extends the one obtained in  \cite[Lemma 3.2]{MMS'17}. Since the proof is slightly different, we only have to revisit some terms by expanding the computation.
\begin{proof}
It suffices to prove the inequality \eqref{car1} for
$$L_0 z=\partial_t z -\dv (A(x) \nabla z) \qquad \text{ and } \qquad L_{0,\Gamma} z_\Gamma=\partial_t z_\Gamma -\dv_\Gamma (D(x)\nabla_\Gamma z_\Gamma) + \partial_\nu^A z,$$
since lower order terms with bounded coefficients do not influence the Carleman estimate. Taking into account \eqref{symA} and \eqref{uellipA},  we denote $A_0 :=\sup\limits_{x\in \overline{\Omega}} \sup\limits_{|\zeta|=1} \langle A(x)\zeta, \zeta \rangle.$ Then,  we have
\begin{equation}
\beta_0 |\zeta|^2 \leq \langle A(x)\zeta, \zeta\rangle \leq A_0 |\zeta|^2, \qquad\qquad x\in \overline{\Omega}, \;\zeta \in \mathbb{R}^N . \label{uellipA1}
\end{equation}
\textbf{Step 1. Conjugate operators.}\\
Let $z\in C^\infty([0,T]\times \overline{\Omega})$, $\lambda\geq \lambda_1 \geq 1$ and $s\geq s_1 \geq 1$ be given. Set
$$\psi := e^{-s\alpha} z, \qquad f:=e^{-s\alpha} L_0 z, \qquad g:= e^{-s\alpha} L_{0,\Gamma} z_\Gamma, \qquad \sigma:=A(\cdot)\nabla \eta^0 \cdot \nabla\eta^0.$$
By definition and by \eqref{uellipA} there exists a positive constant $C_1>0$ such that
\begin{equation}
\beta_0 |\nabla \eta^0|^2 \leq \sigma(x) \leq  C_1, \quad x\in \overline{\Omega}.\label{sigmaineq}
\end{equation}
The corresponding conjugate operators of $L_0$ and $L_{0,\Gamma}$ are given by
$$M\psi :=e^{-s\alpha} L_0(e^{s\alpha} \psi)=e^{-s\alpha} L_0 z, \qquad N\psi_\Gamma :=e^{-s\alpha} L_{0,\Gamma}(e^{s\alpha} \psi_\Gamma)=e^{-s\alpha} L_{0,\Gamma} z_\Gamma.$$
For the sake of simplicity, we will write $z$ and $\psi$ instead of $z_\Gamma$ and $\psi_\Gamma$ on $\Gamma_T$.\\
First, we determine the problem fulfilled by $\psi$ by expanding the spatial derivatives of $\alpha$ and using the symmetry of $A$. We have
\begin{align}
\nabla\alpha &= -\nabla \xi=-\lambda \xi \nabla \eta^0, \label{2eq3.3}\\
\dv(A(x)\nabla \alpha) &= -\lambda^2 \xi \sigma -\lambda \xi \dv(A(x)\nabla\eta^0), \nonumber\\
\partial_t \psi &= e^{-s\alpha} \partial_t z-s\psi \partial_t \alpha, \nonumber\\
\nabla \psi &= e^{-s\alpha} \nabla z +s\lambda \psi \xi \nabla \eta^0 , \label{2eq3.4}\\
\dv(A(x)\nabla \psi) &= e^{-s\alpha} \dv(A(x)\nabla z) + 2s \lambda \xi  A(x)\nabla \eta^0\cdot \nabla\psi - s^2\lambda^2 \xi^2 \psi \sigma \nonumber\\
& \qquad + s\lambda \xi \psi \dv(A(x)\nabla\eta^0)+ s\lambda^2 \xi \psi \sigma .\nonumber
\end{align}
Regrouping the previous formulae we obtain the following evolution equation
\begin{align}
\partial_t \psi -\dv(A(x)\nabla \psi) &= f - 2s\lambda \xi A(x)\nabla \eta^0\cdot \nabla\psi -s\lambda^2 \xi \psi \sigma  \label{2eq3.5}\\
& \quad + s^2\lambda^2 \xi^2 \psi \sigma -s\lambda \xi \psi \dv(A(x)\nabla \eta^0) - s\psi \partial_t \alpha. \nonumber
\end{align}
Similarly, on $\Gamma_T$ we obtain
\begin{equation}
\partial_t \psi -\dv_\Gamma(D(x)\nabla_\Gamma \psi) + \partial_\nu^A  \psi =g-s\psi \partial_t \alpha + s\lambda \psi \xi \partial_\nu^A \eta^0. \label{2eq3.6}
\end{equation}
Extending the corresponding decomposition in \cite{MMS'17}, we rewrite the equations \eqref{2eq3.5} and \eqref{2eq3.6} as
\begin{equation}
M_1 \psi + M_2 \psi =\tilde{f}\; \text{ in } \Omega_T, \qquad N_1 \psi + N_2 \psi =g\; \text{ on } \Gamma_T, \label{2eq3.7}
\end{equation}
where
\begin{align*}
M_1 \psi &= 2 s\lambda^2 \psi \xi \sigma + 2s \lambda \xi A(x)\nabla \eta^0\cdot \nabla\psi +\partial_t \psi =M_{1,1}\psi + M_{1,2}\psi + M_{1,3}\psi,\\
M_2 \psi &= -s^2\lambda^2 \xi^2 \psi \sigma -\dv(A(x)\nabla \psi) + s\psi \partial_t \alpha =M_{2,1}\psi + M_{2,2}\psi + M_{2,3}\psi,\\
N_1 \psi &= \partial_t \psi - s\lambda \psi \xi \partial_\nu^A \eta^0 = N_{1,1}\psi + N_{1,2}\psi,\\
N_2 \psi &= -\dv_\Gamma(D(x)\nabla_\Gamma \psi) + s\psi \partial_t \alpha + \partial_\nu^A \psi = N_{2,1}\psi + N_{2,2}\psi + N_{2,3}\psi,\\
\tilde{f} &= f - s\lambda \xi \psi \dv(A(x)\nabla \eta^0) + s\lambda^2 \psi \xi \sigma.
\end{align*}
By taking $\|\cdot\|_{L^2(\Omega_T)}^2$ and $\|\cdot\|_{L^2(\Gamma_T)}^2$ in the equations \eqref{2eq3.7} and adding the resulting identities, we obtain
\begin{align}
&\|\tilde{f}\|_{L^2(\Omega_T)}^2 + \|g\|_{L^2(\Gamma_T)}^2 = \|M_1 \psi\|_{L^2(\Omega_T)}^2 + \|M_2 \psi\|_{L^2(\Omega_T)}^2 + \|N_1 \psi\|_{L^2(\Gamma_T)}^2 \label{2eq3.8}\\
& \quad + \|N_2 \psi\|_{L^2(\Gamma_T)}^2 + 2 \sum_{i,j=1}^N \langle M_{1,i}\psi, M_{2,j}\psi\rangle_{L^2(\Omega_T)} + 2 \sum_{i,j=1}^N \langle N_{1,i}\psi, N_{2,j}\psi \rangle_{L^2(\Gamma_T)} \nonumber.
\end{align}
\textbf{Step 2. Estimating the mixed terms from below}. We will use the following estimates on $\overline{\Omega}$ in the sequel,
\begin{equation}
|\nabla \alpha| \leq C \lambda \xi, \qquad |\partial_t \alpha| \leq C \xi^2, \qquad |\partial_t \xi| \leq C \xi^2. \label{2eq3.9}
\end{equation}

\textbf{Step 2a.} The first term is negative
\begin{equation*}
\langle M_{1,1}\psi, M_{2,1}\psi\rangle_{L^2(\Omega_T)} =-2 s^3\lambda^4 \int_{\Omega_T} \sigma^2 \xi^3 \psi^2 \,\d x\,\d t.
\end{equation*}
By integration by parts and \eqref{2eq3.3},  we obtain
\begin{align*}
& \langle M_{1,2}\psi, M_{2,1}\psi\rangle_{L^2(\Omega_T)} =-s^3\lambda^3 \int_{\Omega_T} \xi^3 \sigma  A(x)\nabla \eta^0  \cdot \nabla (\psi^2) \,\d x\,\d t.\\
& \quad = s^3 \lambda^3 \int_{\Omega_T} \dv(\xi^3 \sigma  A(x) \nabla \eta^0)\psi^2 \,\d x\,\d t -s^3\lambda^3 \int_{\Gamma_T} \xi^3 \sigma \partial_\nu^A \eta^0 \psi^2 \,\d S\,\d t\\
& \quad = 3 s^3 \lambda^4 \int_{\Omega_T} \sigma^2 \xi^3 \psi^2 \,\d x\,\d t + s^3 \lambda^3 \int_{\Omega_T}  \xi^3 (\nabla \sigma \cdot A(x) \nabla \eta^0) \psi^2 \,\d x\,\d t\\
& \qquad + s^3 \lambda^3 \int_{\Omega_T}  \xi^3  \sigma  \dv(A(x)\nabla \eta^0) \psi^2 \,\d x\,\d t -s^3\lambda^3 \int_{\Gamma_T} \xi^3 \sigma \partial_\nu^A \eta^0 \psi^2 \,\d S\,\d t.
\end{align*}
Using the fact that $\nabla\eta^0 \neq 0$ on $\overline{\Omega \setminus \omega'}$, \eqref{sigmaineq} and \eqref{ceqlm1},  we obtain 
\begin{align*}
&\langle M_{1,1}\psi, M_{2,1}\psi\rangle_{L^2(\Omega_T)} + \langle M_{1,2}\psi, M_{2,1}\psi\rangle_{L^2(\Omega_T)} \\
& \geq C s^3 \lambda^4 \int_{\Omega_T} \xi^3 \psi^2 \,\d x\,\d t -C s^3 \lambda^4 \int_{(0,T)\times \omega'} \xi^3 \psi^2 \,\d x\,\d t -Cs^3\lambda^3 \int_{\Gamma_T} \xi^3 \sigma \partial_\nu \eta^0 \psi^2 \,\d S\,\d t\\
& \geq C s^3 \lambda^4 \int_{\Omega_T} \xi^3 \psi^2 \,\d x\,\d t -C s^3 \lambda^4 \int_{(0,T)\times \omega'} \xi^3 \psi^2 \,\d x\,\d t + Cs^3\lambda^3 \int_{\Gamma_T} \xi^3 \psi^2 \,\d S\,\d t.
\end{align*}
After integrating by parts in time and using \eqref{sigmaineq} with \eqref{2eq3.9} we obtain
\begin{align*}
\langle M_{1,3}\psi, M_{2,1}\psi\rangle_{L^2(\Omega_T)} &=-\frac{1}{2} s^2 \lambda^2 \int_{\Omega_T} \sigma \xi^2 \partial_t (\psi^2) \,\d x\,\d t = s^2 \lambda^2 \int_{\Omega_T} \sigma \partial_t \xi \xi \psi^2 \,\d x\,\d t\\
& \geq -C s^2 \lambda^2 \int_{\Omega_T} \xi^3 \psi^2 \,\d x\,\d t,
\end{align*}
since $\psi$ vanishes at $t=0$ and $t=T$.

\textbf{Step 2b.} Integration by parts and \eqref{uellipA} yield
\begin{align}
& \langle M_{1,1}\psi, M_{2,2}\psi\rangle_{L^2(\Omega_T)}=-2 s\lambda^2 \int_{\Omega_T} \sigma \xi \psi \dv(A(x)\nabla\psi) \,\d x\,\d t \nonumber\\
& \; = 2 s\lambda^2 \int_{\Omega_T} \nabla (\sigma \xi \psi) \cdot A(x)\nabla \psi \,\d x\,\d t - 2s\lambda^2 \int_{\Gamma_T} \sigma \xi \psi \partial_\nu^A \psi \,\d S\,\d t \nonumber\\
& \; = 2 s\lambda^2 \int_{\Omega_T} \sigma \xi A(x)\nabla \psi \cdot\nabla \psi \,\d x\,\d t + 2 s\lambda^2 \int_{\Omega_T}  \xi \psi \nabla\sigma \cdot A(x)\nabla \psi \,\d x\,\d t \nonumber\\
& \qquad + 2 s\lambda^3 \int_{\Omega_T}  \sigma \xi \psi \nabla\eta^0 \cdot A(x)\nabla \psi \,\d x\,\d t - 2s\lambda^2 \int_{\Gamma_T} \sigma \xi \psi \partial_\nu^A \psi \,\d S\,\d t \nonumber\\
& \; \geq 2 s\lambda^2 \int_{\Omega_T} \sigma \xi A(x)\nabla \psi \cdot\nabla \psi \,\d x\,\d t -C s^2\lambda^4 \int_{\Omega_T} \xi^2 \psi^2 \,\d x\,\d t \nonumber\\
& \qquad - C \int_{\Omega_T} (s\xi +\lambda^2) |\nabla\psi|^2 \,\d x\,\d t - 2s\lambda^2 \int_{\Gamma_T} \sigma \xi \psi \partial_\nu^A \psi \,\d S\,\d t, \label{eqj0}
\end{align}
where we employed Cauchy-Schwarz inequality for the terms in the middle as in \cite{MMS'17} and \eqref{sigmaineq}. Using integration by parts and $\partial_i (a_{ij} \partial_j \psi)=a_{ij} \partial_i \partial_j \psi + \partial_i (a_{ij}) \partial_j \psi$, with help of \eqref{2eq3.3} the next addend becomes
\begin{align*}
& \langle M_{1,2}\psi, M_{2,2}\psi\rangle_{L^2(\Omega_T)} =-2s\lambda \int_{\Omega_T} \xi (\nabla \eta^0 \cdot A(x)\nabla \psi) \dv(A(x)\nabla \psi) \,\d x\,\d t\\
& \quad = -2s\lambda \int_{\Omega_T} \sum_{i,j=1}^N \sum_{k,l=1}^N \xi a_{ij} a_{kl} (\partial_k \eta^0) (\partial_l \psi) (\partial_i \partial_j \psi) \,\d x\,\d t \\
& \quad \qquad -2s\lambda \int_{\Omega_T} \xi \sum_{i,j=1}^N \sum_{k,l=1}^N a_{kl} (\partial_k \eta^0) (\partial_l \psi) \partial_i (a_{ij}) \partial_j \psi \,\d x\,\d t\\
& \quad = 2s\lambda^2 \int_{\Omega_T} \sum_{i,j=1}^N \sum_{k,l=1}^N (\partial_i \eta^0)\xi a_{ij} a_{kl} (\partial_k \eta^0) (\partial_l \psi) (\partial_j \psi) \,\d x\,\d t\\
& \qquad +  2s\lambda \int_{\Omega_T} \sum_{i,j=1}^N \sum_{k,l=1}^N \xi \partial_i (a_{ij} a_{kl} \partial_k \eta^0) (\partial_l \psi) (\partial_j \psi) \,\d x\,\d t\\
& \qquad +  2s\lambda \int_{\Omega_T} \sum_{i,j=1}^N \sum_{k,l=1}^N \xi  a_{ij} a_{kl} \partial_k \eta^0 (\partial_i \partial_l \psi) (\partial_j \psi) \,\d x\,\d t\\
& \qquad - 2s\lambda \int_{\Gamma_T} \sum_{k,l=1}^N \xi  a_{kl} (\partial_k \eta^0) (\partial_l \psi) \partial_\nu^A \psi \,\d S\,\d t\\
& \qquad -2s\lambda \int_{\Omega_T} \xi \sum_{i,j=1}^N \sum_{k,l=1}^N a_{kl} (\partial_k \eta^0) (\partial_l \psi) \partial_i (a_{ij}) \partial_j \psi \,\d x\,\d t\\
&= 2s\lambda^2 \bigintsss_{\Omega_T} \xi \left|\nabla \eta^0 \cdot A(x)\nabla \psi\right|^2 \,\d x\,\d t\\
& \qquad +  2s\lambda \int_{\Omega_T} \sum_{i,j=1}^N \sum_{k,l=1}^N \xi \partial_i (a_{ij} a_{kl} \partial_k \eta^0) (\partial_l \psi) (\partial_j \psi) \,\d x\,\d t\\
& \qquad +  2s\lambda \int_{\Omega_T} \sum_{i,j=1}^N \sum_{k,l=1}^N \xi  a_{ij} a_{kl} \partial_k \eta^0 (\partial_i \partial_l \psi) (\partial_j \psi) \,\d x\,\d t \\
& \qquad - 2s\lambda \int_{\Gamma_T} \xi (\partial_\nu \eta^0) (\partial_\nu^A \psi)^2 \,\d S\,\d t\\
& \qquad -2s\lambda \int_{\Omega_T} \xi \sum_{i,j=1}^N \sum_{k,l=1}^N a_{kl} (\partial_k \eta^0) (\partial_l \psi) \partial_i (a_{ij}) \partial_j \psi \,\d x\,\d t\\
& = D_1+D_2+D_3+D_4 +D_5.
\end{align*}
Observe that the first term $D_1$ is nonnegative. Similarly to previous integration by parts we obtain
\begin{align*}
D_3 & = s\lambda \int_{\Omega_T} \sum_{i,j=1}^N \sum_{k,l=1}^N \xi  a_{ij} a_{kl} (\partial_k \eta^0) \partial_l [(\partial_i \psi)(\partial_j \psi)] \,\d x\,\d t\\
& = - s\lambda^2 \int_{\Omega_T} \xi \sigma \sum_{i,j=1}^N  a_{ij} (\partial_i \psi)(\partial_j \psi) \,\d x\,\d t \\
& \quad - s\lambda \int_{\Omega_T} \xi \sum_{i,j=1}^N \sum_{k,l=1}^N  \partial_l (a_{ij} a_{kl}\partial_k \eta^0) (\partial_i \psi)(\partial_j \psi) \,\d x\,\d t\\
& \quad + s\lambda \int_{\Gamma_T} \xi \partial_\nu^A \eta^0 \sum_{i,j=1}^N  a_{ij} (\partial_i \psi)(\partial_j \psi) \,\d S\,\d t\\
&= - s\lambda^2 \int_{\Omega_T} \sigma \xi A(x)\nabla \psi\cdot \nabla \psi \,\d x\,\d t\\
& \quad - s\lambda \int_{\Omega_T} \xi \sum_{i,j=1}^N \sum_{k,l=1}^N  \partial_l (a_{ij} a_{kl}\partial_k \eta^0) (\partial_i \psi)(\partial_j \psi) \,\d x\,\d t\\
& \quad + s\lambda \int_{\Gamma_T} \xi (\partial_\nu \eta^0) (A(x)\nu\cdot \nu)(A(x)\nabla \psi \cdot \nabla \psi) \,\d S\,\d t,
\end{align*}
where we employed $\partial_\nu^A \eta^0 =(\partial_\nu \eta^0) (A\nu\cdot \nu)$, since $\eta^0\rvert_\Gamma=0$. Then, using the symmetry of $A$ and \eqref{eqgn} we obtain 
\begin{align}
&\langle M_{1,2}\psi, M_{2,2}\psi\rangle_{L^2(\Omega_T)} \nonumber\\
& \geq - 2s\lambda \int_{\Gamma_T} \xi (\partial_\nu \eta^0) (\partial_\nu^A \psi)^2 \,\d S\,\d t + s\lambda \int_{\Gamma_T} \xi (\partial_\nu \eta^0) (A(x)\nu\cdot\nu)(A(x)\nabla \psi \cdot \nabla\psi) \,\d S\,\d t \nonumber\\
& \qquad  - C s\lambda \int_{\Omega_T} \xi |\nabla \psi|^2 \,\d x\,\d t - s\lambda^2 \int_{\Omega_T} \sigma \xi A(x)\nabla \psi \cdot \nabla\psi \,\d x\,\d t \nonumber\\
& \geq \underbrace{- s\lambda \int_{\Gamma_T} \xi (\partial_\nu \eta^0) (\partial_\nu^A\psi)^2 \,\d S\,\d t + s\lambda\int_{\Gamma_T} \xi (\partial_\nu \eta^0) (A(x)\nu\cdot\nu)(A(x)\nabla \psi \cdot \nabla\psi) \,\d S\,\d t}_{J} \nonumber\\
& \qquad - C s\lambda \int_{\Omega_T} \xi |\nabla \psi|^2 \,\d x\,\d t -s\lambda^2 \int_{\Omega_T} \sigma \xi A(x)\nabla \psi \cdot \nabla\psi \,\d x\,\d t \nonumber\\
& \qquad  - s\lambda \int_{\Gamma_T} \xi (\partial_\nu \eta^0) (\partial_\nu^A\psi)^2 \,\d S\,\d t .\label{eqj1}
\end{align}
Next we estimate $J$ with help of \eqref{eqconormal} and \eqref{uellipA1} as follows
\begin{align*}
J &= s\lambda \int_{\Gamma_T} \xi (\partial_\nu \eta^0) [-(\partial_\nu^A\psi)^2 + |A^{\frac{1}{2}}\nu|^2 |A^{\frac{1}{2}}\nabla \psi|^2] \,\d S\,\d t\\
&= s\lambda \int_{\Gamma_T} \xi (\partial_\nu \eta^0) [|A^{\frac{1}{2}}\nu|^2 |A^{\frac{1}{2}}\nabla_\Gamma \psi|^2 - (A\nabla_\Gamma \psi \cdot \nu)^2] \,\d S\,\d t\\
& \geq s\lambda \int_{\Gamma_T} \xi (\partial_\nu \eta^0) |A^{\frac{1}{2}}\nu|^2 |A^{\frac{1}{2}}\nabla_\Gamma \psi|^2 \,\d S\,\d t\\
& \geq C s\lambda \int_{\Gamma_T} \xi (\partial_\nu \eta^0) |\nabla_\Gamma \psi|^2 \,\d S\,\d t.
\end{align*}
Combining this with \eqref{eqj1}, we derive
\begin{align*}
\langle M_{1,2}\psi, M_{2,2}\psi\rangle_{L^2(\Omega_T)} &\geq - s\lambda \int_{\Gamma_T} \xi (\partial_\nu \eta^0) (\partial_\nu^A\psi)^2 \,\d S\,\d t \\
& \quad + Cs\lambda \int_{\Gamma_T} \xi (\partial_\nu \eta^0) |\nabla_\Gamma \psi|^2 \,\d S\,\d t \\
& \quad  - C s\lambda \int_{\Omega_T} \xi |\nabla \psi|^2 \,\d x\,\d t -s\lambda^2 \int_{\Omega_T} \sigma \xi A(x)\nabla \psi \cdot \nabla\psi \,\d x\,\d t. 
\end{align*}
The last term  cancels with the one from \eqref{eqj0}.
Integration by parts once again, we obtain
\begin{align}
\langle M_{1,3}\psi, M_{2,2}\psi\rangle_{L^2(\Omega_T)} &= -\int_{\Omega_T} \partial_t \psi\, \dv(A(x)\nabla\psi) \,\d x\,\d t \nonumber\\
& =\int_{\Omega_T} A(x)\nabla\psi \cdot \partial_t (\nabla\psi) -\int_{\Gamma_T} \partial_t \psi \partial_\nu^A \psi \,\d S\,\d t \nonumber\\
& = \frac{1}{2} \int_{\Omega_T}  \frac{\mathrm{d}}{\d t} (A(x)\nabla \psi \cdot \nabla \psi) \,\d x\, \d t -\int_{\Gamma_T} \partial_t \psi \partial_\nu^A \psi \,\d S\,\d t \nonumber\\
& = -\int_{\Gamma_T} \partial_t \psi \partial_\nu^A \psi \,\d S\,\d t, \label{eqtg}
\end{align}
where we used the symmetry of $A$ and the fact that $\nabla \psi$ vanishes at $t=0$ and $t=T$.

\textbf{Step 2c.} Using \eqref{2eq3.9}, we estimate
\begin{align*}
\langle M_{1,1}\psi, M_{2,3}\psi\rangle_{L^2(\Omega_T)} =2 s^2\lambda^2\int_{\Omega_T} \sigma \xi (\partial_t\alpha) \psi^2\,\d x\,\d t\geq - C s^2\lambda^2\int_{\Omega_T} \xi^3\psi^2 \,\d x\,\d t.
\end{align*}
Integration by parts, \eqref{2eq3.3} and \eqref{2eq3.9} imply
\begin{align*}
&\langle M_{1,2}\psi, M_{2,3}\psi\rangle_{L^2(\Omega_T)} = s^2\lambda \int_{\Omega_T} (\partial_t\alpha)\xi A(x)\nabla\eta^0 \cdot \nabla(\psi^2) \,\d x\,\d t\\
& \quad =  s^2\lambda \int_{\Gamma_T} (\partial_t \alpha) \xi \partial_\nu^A \eta^0 \, \psi^2 \,\d S\,\d t - s^2\lambda \int_{\Omega_T} \dv(\xi \partial_t\alpha A(x)\nabla\eta^0)\psi^2 \,\d x\,\d t\\
& \quad = s^2\lambda \int_{\Gamma_T} (\partial_t \alpha) \xi \partial_\nu^A \eta^0 \, \psi^2 \,\d S\,\d t - s^2\lambda \int_{\Omega_T} \nabla(\partial_t\alpha) \cdot A(x)\nabla \eta^0 \xi\psi^2 \,\d x\,\d t \\
& \qquad - s^2\lambda^2 \int_{\Omega_T} (\partial_t\alpha)\xi \sigma \psi^2 \,\d x\,\d t - s^2 \lambda \int_{\Omega_T} (\partial_t\alpha)\xi \dv(A(x)\nabla \eta^0) \psi^2 \,\d x\,\d t\\
& \quad \geq -C s^2 \lambda \int_{\Gamma_T} \xi^3 \psi^2 \,\d S\,\d t -C s^2\lambda^2 \int_{\Omega_T} \xi^3 \psi^2 \,\d x\,\d t.
\end{align*}
Since $\psi(0)=\psi(T)=0$ and $|\partial_t^2 \alpha|\leq C\xi^3$, integration by parts yields
\begin{align}
\langle M_{1,3}\psi, M_{2,3}\psi\rangle_{L^2(\Omega_T)} &=\frac{s}{2}\int_{\Omega_T} \partial_t\alpha \partial_t(\psi^2)\,\d x\,\d t= -\frac{s}{2}\int_{\Omega_T} \partial^2_t\alpha\, \psi^2 \,\d x\,\d t \label{2eq3.12}\\
&\geq  -C s\int_{\Omega_T} \xi^3 \psi^2 \,\d x\,\d t,\nonumber
\end{align}

\textbf{Step 2d. Estimating boundary terms.} For the boundary terms $N_1$ and $N_2$, we will use the divergence formula \eqref{sdt}.
Using \eqref{sdt},  we have
\begin{align*}
\langle N_{1,1}\psi, N_{2,1}\psi\rangle_{L^2(\Gamma_T)}&=-\int_{\Gamma_T} \dv_\Gamma (D(x)\nabla_\Gamma \psi) \partial_t \psi \,\,\d S\,\d t \\
&=\int_{\Gamma_T} \langle D(x)\nabla_\Gamma \psi, \partial_t (\nabla_\Gamma \psi)\rangle_\Gamma \,\d S\,\d t\\
&=\frac{1}{2} \int_{\Gamma_T}  \frac{\mathrm{d}}{\d t} \langle D(x)\nabla_\Gamma \psi, \nabla_\Gamma \psi \rangle_\Gamma \,\d S\, \d t=0,
\end{align*}
by means of $\psi(0)=\psi(T)=0$. Since $\xi(t,\cdot)$ is constant on $\Gamma$, \eqref{sdt} and \eqref{uellipD} yield
\begin{align*}
&\langle N_{1,2}\psi, N_{2,1}\psi\rangle_{L^2(\Gamma_T)} = s\lambda \int_{\Gamma_T} (\partial_\nu^A \eta^0 \xi \psi) \dv_\Gamma (D(x)\nabla_\Gamma \psi) \,\d S\,\d t\\
&=- s \lambda \int_{\Gamma_T} \langle D(x)\nabla_\Gamma \psi,\nabla_\Gamma(\partial_\nu^A \eta^0 \xi \psi)\rangle_\Gamma \,\d S\,\d t \\
&= - s \lambda \int_{\Gamma_T} \xi \psi \langle D(x)\nabla_\Gamma \psi, \nabla_\Gamma(\partial_\nu^A \eta^0)\rangle_\Gamma \d S \d t - s \lambda \int_{\Gamma_T} \partial_\nu^A \eta^0 \xi \langle D(x)\nabla_\Gamma \psi, \nabla_\Gamma \psi \rangle_\Gamma \d S \d t\\
& \geq - s \lambda \int_{\Gamma_T} \xi \psi \langle D(x)\nabla_\Gamma \psi, \nabla_\Gamma(\partial_\nu^A \eta^0)\rangle_\Gamma \,\d S\,\d t -\beta_0 s \lambda \int_{\Gamma_T} \partial_\nu^A \eta^0 \xi |\nabla_\Gamma \psi|_\Gamma^2 \d S\, \d t\\
& \geq - s \lambda \int_{\Gamma_T} \xi \psi \langle D(x)\nabla_\Gamma \psi, \nabla_\Gamma(\partial_\nu^A \eta^0)\rangle_\Gamma \,\d S\,\d t -C s \lambda \int_{\Gamma_T} \partial_\nu \eta^0 \xi |\nabla_\Gamma \psi|^2 \d S\, \d t,
\end{align*}
where we employed \eqref{ceqlm1}. The next terms are estimated by
\begin{align*}
\langle N_{1,1}\psi, N_{2,2}\psi\rangle_{L^2(\Gamma_T)} &=\frac{s}{2} \int_{\Gamma_T} \partial_t \alpha  \partial_t (\psi^2) \,\d S\,\d t \geq -C s \int_{\Gamma_T}  \xi^3 \psi^2 \,\d S\,\d t
\end{align*}
and by \eqref{2eq3.9} we have
\begin{align*}
\langle N_{1,2}\psi, N_{2,2}\psi\rangle_{L^2(\Gamma_T)} &=- s^2\lambda \int_{\Gamma_T} \partial_\nu^A \eta^0 (\partial_t \alpha) \xi \psi^2 \,\d S\,\d t \geq -C s^2 \lambda \int_{\Gamma_T} \xi^3 \psi^2 \,\d S\,\d t.
\end{align*}
Finally, the term
$$\langle N_{1,1}\psi, N_{2,3}\psi\rangle_{L^2(\Gamma_T)}= \int_{\Gamma_T} \partial_t \psi \partial_\nu^A \psi \,\d S\,\d t,$$
cancels with the one from \eqref{eqtg}, and
$$\langle N_{1,2}\psi, N_{2,3}\psi\rangle_{L^2(\Gamma_T)}=- s\lambda\int_{\Gamma_T} \xi \partial_\nu^A \eta^0 \partial_\nu^A \psi \psi \,\d S\,\d t.$$
\textbf{Step 3. The transformed estimate.} By regrouping final estimates in the previous steps and
increasing $\lambda_1$ and $s_1$ to absorb lower order terms, we derive
\begin{align*}
& \sum_{i,j=1}^N\langle  M_{1,i}\psi, M_{2,j}\psi\rangle_{L^2(\Omega_T)} + \sum_{i,j=1}^N\langle N_{1,i}\psi, N_{2,j}\psi\rangle_{L^2(\Gamma_T)}\\
& \quad \geq Cs^3\lambda^4\int_{\Omega_T} \xi^3\psi^2 \,\d x\,\d t- Cs^3\lambda^4\int_{(0,T)\times\omega'} \xi^3\psi^2 \,\d x\,\d t \\
& \qquad + C s^3\lambda^3\int_{\Gamma_T} \xi^3\psi^2 \,\d S\,\d t + Cs\lambda^2\int_{\Omega_T} \xi|\nabla\psi|^2 \,\d x\,\d t\\
& \qquad - Cs\lambda^2\int_{(0,T)\times\omega'}\xi|\nabla\psi|^2 \,\d x\,\d t -Cs\lambda^2\int_{\Gamma_T}\xi |\psi| (\partial_\nu \eta^0)^2 |\partial_\nu^A \psi| \,\d S\,\d t\\
& \qquad -s\lambda \int_{\Gamma_T} \xi(\partial_\nu \eta^0) (\partial_\nu^A \psi)^2 \,\d S\,\d t - Cs \lambda\int_{\Gamma_T} \xi\partial_\nu \eta^0 |\nabla_{\Gamma}\psi|^2  \,\d S\,\d t\\
& \qquad - Cs \lambda\int_{\Gamma_T}\xi  \psi  \langle \nabla_{\Gamma} (\partial_\nu^A \eta^0), D(x)\nabla_{\Gamma} \psi \rangle_{\Gamma} \,\d S\,\d t + Cs \lambda\int_{\Gamma_T} \xi\partial_\nu \eta^0 |\nabla_{\Gamma}\psi|^2  \,\d S\,\d t\\
& \qquad  -s \lambda\int_{\Gamma_T}  \xi \psi (\partial_\nu^A \eta^0) (\partial_\nu^A \psi) \,\d S\,\d t.
\end{align*}
We combine this estimate with \eqref{2eq3.8} and absorb lower order terms resulting from $\tilde{f}$ and $\tilde{g}$ to left-hand side by increasing $\lambda_1$ and $s_1$.
Using \eqref{eqlm1} and \eqref{ceqlm1},  we deduce
\begin{align}
&\lVert M_1\psi \rVert^2_{L^2(\Omega_T)}+  \lVert M_2\psi \rVert^2_{L^2(\Omega_T)} + \lVert N_1\psi \rVert^2_{L^2(\Gamma_T)}+ \lVert N_2\psi \rVert^2_{L^2(\Gamma_T)} \nonumber\\
& \quad + s^3\lambda^4\int_{\Omega_T} \xi^3\psi^2 \,\d x\,\d t + s\lambda^2\int_{\Omega_T} \xi|\nabla\psi|^2 \,\d x\,\d t + s^3\lambda^3\int_{\Gamma_T} \xi^3\psi^2 \,\d S\,\d t\nonumber\\
& \quad  + s\lambda\int_{\Gamma_T} \xi (|\nabla_{\Gamma}\psi|^2 + (\partial_\nu^A \psi)^2) \,\d S\,\d t\nonumber\\
& \leq C \int_{\Omega_T} e^{-2s\alpha}|\partial_t z -\dv(A(x)\nabla z)|^2 \,\d x\,\d t \nonumber\\
& \quad + C \int_{\Gamma_T} e^{-2s\alpha}|\partial_t z -\dv_{\Gamma}(D(x)\nabla_\Gamma z_\Gamma) +\partial_\nu^A z|^2 \,\d S\,\d t\nonumber\\
& \quad + Cs^3\lambda^4\int_{\omega'_T} \xi^3\psi^2 \,\d x\, \d t+  Cs\lambda^2\int_{\omega'_T}\xi|\nabla\psi|^2 \,\d x\,\d t \nonumber\\
& \quad + \underbrace{Cs\lambda^2\int_{\Gamma_T} (\partial_\nu \eta^0)^2 \xi |\psi| |\partial_\nu^A \psi| \,\d S\,\d t}_{I_1} + \underbrace{Cs \lambda\int_{\Gamma_T}  \xi |\partial_\nu \eta^0| |\nabla_\Gamma \psi|^2  \,\d S\,\d t}_{I_2} \nonumber\\
& \quad + \underbrace{C s \lambda\int_{\Gamma_T}\xi  \psi  |\nabla_{\Gamma}(\partial_\nu^A \eta^0)|_\Gamma |D(x)\nabla_{\Gamma} \psi|_\Gamma \,\d S\,\d t}_{I_3} + \underbrace{Cs \lambda\int_{\Gamma_T} \xi|\partial_\nu^A \eta^0| |\partial_\nu^A \psi| |\psi| \,\d S\,\d t}_{I_4}. \label{2eq3.13}
\end{align}
By Young's inequality, $I_1$ can be estimated by
\begin{align}
I_1 &\leq  C \int_{\Gamma_T} |s\lambda^{3/2}\xi^{1/2}\psi| \,|\lambda^{1/2}\xi^{1/2}\partial_\nu^A\psi| \,\d S\,\d t \nonumber\\
& \leq C \left(s^2\lambda^3 \int_{\Gamma_T} \xi^3\psi^2 \,\d S\,\d t + \lambda\int_{\Gamma_T}\xi (\partial_\nu^A \psi)^2 \,\d S\,\d t \right). \label{2eq3.15}
\end{align}
Choosing $s_1$ large enough, we can then control \eqref{2eq3.15} by the left-hand side of \eqref{2eq3.13}. In similar way, one can absorb $I_4$ and also $I_3$, since
\begin{align}
I_3 &\leq  C \left(\int_{\Gamma_T} \xi |\nabla_\Gamma\psi|^2 \,\d S\,\d t + s^2\lambda^2\int_{\Gamma_T}\xi \psi^2 \,\d S\,\d t \right).\label{2eq3.16}
\end{align}
Using the ellipticiy of $D$ with divergence formula \eqref{sdt} and the fact that $\xi(t,\cdot)$ is constant on $\Gamma$, the integral $I_2$ can be bounded by
\begin{align}
I_2 &\leq C s\lambda \int_{\Gamma_T} \xi |\nabla_\Gamma \psi|_\Gamma^2 \,\d S\,\d t \leq C s\lambda \int_0^T \xi \int_\Gamma\langle D(x)\nabla_\Gamma \psi, \nabla_\Gamma \psi\rangle_\Gamma \,\d S\,\d t\nonumber\\
& \leq C \int_0^T \int_\Gamma (s^{-1/2}\xi^{-1/2}|\dv_\Gamma( D(x)\nabla_\Gamma \psi)|)(s^{3/2}\lambda\xi^{3/2} |\psi|) \,\d S\,\d t \nonumber\\
& \leq s^{-1}\int_{\Gamma_T}\xi^{-1} |\dv_\Gamma( D(x)\nabla_\Gamma \psi)|^2 \,\d S\,\d t + Cs^3\lambda^2 \int_{\Gamma_T} \xi^3 \psi^2 \,\d S\,\d t. \label{2eq3.17}
\end{align}
The second addend in \eqref{2eq3.17} can be absorbed by the left-hand side of \eqref{2eq3.13}
by choosing $\lambda_1$ sufficiently large. Thus, we arrive at
\begin{align}
&\lVert M_1\psi \rVert^2_{L^2(\Omega_T)}+  \lVert M_2\psi \rVert^2_{L^2(\Omega_T)} + \lVert N_1\psi \rVert^2_{L^2(\Gamma_T)}+ \lVert N_2\psi \rVert^2_{L^2(\Gamma_T)} \nonumber\\
& \quad + s^3\lambda^4\int_{\Omega_T} \xi^3\psi^2 \,\d x\,\d t + s\lambda^2\int_{\Omega_T} \xi|\nabla\psi|^2 \,\d x\,\d t + s^3\lambda^3\int_{\Gamma_T} \xi^3\psi^2 \,\d S\,\d t\nonumber\\
& \quad  + s\lambda\int_{\Gamma_T} \xi (|\nabla_{\Gamma}\psi|^2 + (\partial_\nu^A \psi)^2) \,\d S\,\d t\nonumber\\
& \leq C \int_{\Omega_T} e^{-2s\alpha}|\partial_t z -\dv(A(x)\nabla z)|^2 \,\d x\,\d t \nonumber\\
& \quad + C \int_{\Gamma_T} e^{-2s\alpha}|\partial_t z -\dv_{\Gamma}(D(x)\nabla_\Gamma z_\Gamma) +\partial_\nu^A z|^2 \,\d S\,\d t + Cs^3\lambda^4\int_{\omega'_T} \xi^3\psi^2 \,\d x\,\d t \nonumber\\
& \quad +  Cs\lambda^2\int_{\omega'_T}\xi|\nabla\psi|^2 \,\d x\,\d t + s^{-1}\int_{\Gamma_T}\xi^{-1} |\dv_\Gamma( D(x)\nabla_\Gamma \psi)|^2 \,\d S\,\d t. \label{2eq3.18}
\end{align}
To transmit the last term in \eqref{2eq3.18} to the left, we observe first that\\
$-\dv_\Gamma( D(x)\nabla_\Gamma \psi)=N_2 \psi-s\psi \partial_t \alpha -\partial_\nu^A \psi$. Combined with \eqref{2eq3.9}, this identity yields
\begin{align}
I &:=s^{-1}\int_{\Gamma_T}\xi^{-1} |\dv_\Gamma( D(x)\nabla_\Gamma \psi)|^2 \,\d S\,\d t \nonumber\\
& \leq \frac{1}{2} \|N_2 \psi\|^2_{L^2(\Gamma_T)}+ C s \int_{\Gamma_T} \xi^3\psi^2 \,\d S\,\d t + C\int_{\Gamma_T} \xi (\partial_\nu^A \psi)^2 \,\d S\,\d t\label{2eq3.19}
\end{align}
for sufficiently large $s_1$. Choosing $\lambda_1$ and $s_1$ large enough so that
\eqref{2eq3.18} becomes
\begin{align}
&\lVert M_1\psi \rVert^2_{L^2(\Omega_T)}+  \lVert M_2\psi \rVert^2_{L^2(\Omega_T)} + \lVert N_1\psi \rVert^2_{L^2(\Gamma_T)}+ \lVert N_2\psi \rVert^2_{L^2(\Gamma_T)} \nonumber\\
& \quad + s^3\lambda^4\int_{\Omega_T} \xi^3\psi^2 \,\d x\,\d t + s\lambda^2\int_{\Omega_T} \xi|\nabla\psi|^2 \,\d x\,\d t + s^3\lambda^3\int_{\Gamma_T} \xi^3\psi^2 \,\d S\,\d t\nonumber\\
& \quad  + s\lambda\int_{\Gamma_T} \xi |\nabla_{\Gamma}\psi|^2 \,\d S\,\d t + s\lambda\int_{\Gamma_T} \xi (\partial_\nu^A \psi)^2 \,\d S\,\d t \nonumber\\
& \leq C \int_{\Omega_T} e^{-2s\alpha}|\partial_t z -\dv(A(x)\nabla z)|^2 \,\d x\,\d t \nonumber\\
& \qquad + C \int_{\Gamma_T} e^{-2s\alpha}|\partial_t z -\dv_{\Gamma}(D(x)\nabla_\Gamma z_\Gamma) +\partial_\nu^A z|^2 \,\d S\,\d t \nonumber\\
& \qquad + Cs^3\lambda^4\int_{\omega'_T} \xi^3\psi^2 \,\d x\,\d t + Cs\lambda^2\int_{\omega'_T}\xi|\nabla\psi|^2 \,\d x\,\d t. \label{2eq3.20}
\end{align}
The rest of the proof follows from the same strategy as  in \cite{MMS'17}.
\end{proof}

\begin{remark}
By means of the transformation $t'=T^{-1}(T-t_0) t+t_0$, the Carleman estimate \eqref{car1} remains true replacing $t(T-t)$ by $(t-t_0)(T -t)$ in the weight functions $\alpha$ and $\xi$ defined by \eqref{w1}, and integrating on $(t_0,T)$ instead of $(0,T)$, for $t_0 \in (0,T)$. In that case, we adopt the same notation for $\alpha$ and $\xi$, and we further denote $ \Omega_{t_0,T}:=(t_0,T)\times \Omega, \quad \Gamma_{t_0,T}:=(t_0,T)\times \Gamma, \quad\omega_{t_0,T}:=(t_0,T)\times \omega.$
\label{rmq2.13}
\end{remark}

%The title of your section 3
\section{Global Lipschitz stability for an inverse source problem}\label{sec3}
The object of this section is to recover the source term $\mathcal{F}=(F,G)$ in \eqref{eq1to4} belonging to $\mathcal{S}(C_0)$ defined in \eqref{eqas}, from a single measurement $Y(T_0,\cdot)=(y,y_{\Gamma})\rvert_{t=T_0}$ and some extra partial observation on the first component of the solution  $y\rvert_{\omega_{t_0,T}}$. We notice here that the set of admissible source terms $\mathcal{S}(C_0)$ is necessarily involved, since the uniqueness for inverse source problems falls into default in the general case (see e.g., \cite[Commentary 6.6.]{IS'90}).
 
The main result of this paper reads as follows.
\begin{theorem}\label{thm1}
Let $T>0$, $t_0\in (0,T)$ and  $\displaystyle T_0=\frac{T+t_0}{2}$. Consider $Y :=(y,y_\Gamma)$ the mild solution of \eqref{eq1to4} and $C_0>0$. Then, there exists a positive constant \\$C=C(\Omega, \omega, T,t_0,C_0,\|B\|_\infty,\|p\|_\infty,\|b\|_\infty, \|q\|_\infty)$ such that, for any admissible source $\mathcal{F}=(F,G) \in \mathcal{S}(C_0)$, we have
\begin{equation}
\|(F,G)\|_{\mathbb{L}^{2}_T} \leq C\left(\|Y(T_0,\cdot)\|_{\mathbb{H}^2} + \|\partial_t y\|_{L^2(\omega_{t_0, T})}\right). \label{eq3.7}
\end{equation} \label{thm3.1}
\end{theorem}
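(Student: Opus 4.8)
The plan is to run the Bukhgeim--Klibanov argument in the Imanuvilov--Yamamoto form: differentiate \eqref{eq1to4} in time, apply the Carleman estimate \eqref{car1} on $(t_0,T)$ (Remark~\ref{rmq2.13}) to the time derivative, and close the estimate using the admissibility structure \eqref{eqas} together with a weighted energy identity at $t=T_0$. First I would set $Z:=\partial_tY=(z,z_\Gamma)$. Since $\mathcal{F}\in\mathcal{S}(C_0)\subset H^1(0,T;\mathbb{L}^2)$, Proposition~\ref{prop2} gives $Y\in\mathbb{E}_2(\tau,T)$ for every $\tau\in(0,T)$, hence $Z\in\mathbb{E}_1(t_0,T)$ with moreover $\partial_tZ\in L^2(t_0,T;\mathbb{L}^2)$ and $Z\in C([t_0,T];\mathbb{L}^2)$; as $A,B,p,D,b,q$ do not depend on $t$, the pair $Z$ solves $Lz=F_t$ in $\Omega_{t_0,T}$, $L_\Gamma z_\Gamma=G_t$ on $\Gamma_{t_0,T}$, $z_\Gamma=z|_\Gamma$. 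Evaluating \eqref{eq1to4} at $t=T_0$ and invoking the elliptic regularity recalled in Section~\ref{sec2}, one gets $Y(T_0,\cdot)\in\mathbb{H}^2$ (so the right-hand side of \eqref{eq3.7} is meaningful) and
$$F(T_0,x)=z(T_0,x)-\dv(A\nabla y(T_0,x))+B\cdot\nabla y(T_0,x)+p(x)y(T_0,x),$$
with the analogous identity for $G(T_0,\cdot)$ on $\Gamma$ (involving the trace $\partial_\nu^A y(T_0,\cdot)$, controlled by $\|Y(T_0,\cdot)\|_{\mathbb{H}^2}$). Write $P:=\int_\Omega e^{-2s\alpha(T_0,x)}|F(T_0,x)|^2\,\d x+\int_\Gamma e^{-2s\alpha(T_0,x)}|G(T_0,x)|^2\,\d S$ and let $Q$ be the same quantity with $(z,z_\Gamma)(T_0,\cdot)$ in place of $(F,G)(T_0,\cdot)$. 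Since, for fixed $s,\lambda$, the weight $e^{-2s\alpha(T_0,\cdot)}$ is bounded above and below by positive constants on $\overline\Omega$, the identities above yield $P\le 2Q+C\|Y(T_0,\cdot)\|_{\mathbb{H}^2}^2$ and, conversely, $\|F(T_0,\cdot)\|_{L^2(\Omega)}^2+\|G(T_0,\cdot)\|_{L^2(\Gamma)}^2\le CP$; so it suffices to bound $P$.

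Next I would apply \eqref{car1} (with $(t-t_0)(T-t)$ in the weights) to $Z$. Its source side equals $C\int_{\Omega_{t_0,T}}e^{-2s\alpha}|F_t|^2+C\int_{\Gamma_{t_0,T}}e^{-2s\alpha}|G_t|^2$, which by \eqref{eqas} is bounded by $CC_0^2\big(\int_\Omega|F(T_0,x)|^2\int_{t_0}^Te^{-2s\alpha(t,x)}\,\d t\,\d x+\int_\Gamma|G(T_0,x)|^2\int_{t_0}^Te^{-2s\alpha(t,x)}\,\d t\,\d S\big)$. The decisive point is that $t\mapsto\alpha(t,x)$ has a strict minimum on $(t_0,T)$ at $t=T_0$, with $\partial_t\alpha(T_0,x)=0$ and $\partial_t^2\alpha(T_0,x)>0$ uniformly in $x$; a Laplace-type bound then gives $\int_{t_0}^Te^{-2s\alpha(t,x)}\,\d t\le Cs^{-1/2}e^{-2s\alpha(T_0,x)}$ for $s$ large, so the whole source side of \eqref{car1} is $\le Cs^{-1/2}P$ and \eqref{car1} reads
$$\mathcal{L}\le Cs^3\lambda^4\int_{\omega_{t_0,T}}\xi^3|z|^2e^{-2s\alpha}\,\d x\,\d t+Cs^{-1/2}P,$$
where $\mathcal{L}$ denotes the nonnegative left-hand side of \eqref{car1} applied to $Z$.

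It then remains to estimate $Q$. For $\tau\in(t_0,T_0)$ I would write $Q=\big(\int_\Omega|z(\tau)|^2e^{-2s\alpha(\tau)}\,\d x+\int_\Gamma|z_\Gamma(\tau)|^2e^{-2s\alpha(\tau)}\,\d S\big)+\int_\tau^{T_0}\frac{\d}{\d t}\big(\int_\Omega|z|^2e^{-2s\alpha}\,\d x+\int_\Gamma|z_\Gamma|^2e^{-2s\alpha}\,\d S\big)\d t$ and average over $\tau\in(t_0,T_0)$. Since $\xi\ge c_0>0$ on $\overline{\Omega_{t_0,T}}$ one has $1\le c_0^{-3}\xi^3$, so the averaged first bracket is $\le Cs^{-3}\lambda^{-4}\mathcal{L}$; for the time-derivative term one substitutes $\partial_tz$ and $\partial_tz_\Gamma$ from the equations, integrates by parts in $x$, and uses Young's inequality. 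Here the conormal boundary term $-2\int_\Gamma z_\Gamma\,\partial_\nu^Az\,e^{-2s\alpha}$ produced by $\int_\Omega z\,\dv(A\nabla z)e^{-2s\alpha}$ cancels the one coming from $\partial_\nu^Az$ in the dynamic boundary condition, the surface term $-2\int_\Gamma\langle D\nabla_\Gamma z_\Gamma,\nabla_\Gamma z_\Gamma\rangle_\Gamma e^{-2s\alpha}$ is $\le0$ because $\alpha$ is constant in $x$ on $\Gamma$, and (using $|\nabla\alpha|\le C\lambda\xi$, $|\partial_t\alpha|\le C\xi^2$, $\xi\ge c_0$) every remaining term is dominated either by a small multiple of the ingredients $s\lambda^2\xi|\nabla z|^2$, $s^3\lambda^4\xi^3|z|^2$, $s\lambda\xi(|\nabla_\Gamma z_\Gamma|^2+|\partial_\nu^Az|^2)$, $s^3\lambda^3\xi^3|z_\Gamma|^2$ already inside $\mathcal{L}$, or by $\int|F_t|^2e^{-2s\alpha}+\int|G_t|^2e^{-2s\alpha}\le Cs^{-1/2}P$, or by $C\|Y(T_0,\cdot)\|_{\mathbb{H}^2}^2$. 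This yields $Q\le C(s^{-3}\lambda^{-4}+\varepsilon)\mathcal{L}+Cs^{-1/2}P+C\|Y(T_0,\cdot)\|_{\mathbb{H}^2}^2$.

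Finally, combining $P\le2Q+C\|Y(T_0,\cdot)\|_{\mathbb{H}^2}^2$ with the last two displayed inequalities gives $P\le (C's^{-1/2}+\dots)P+Cs^3\lambda^4\int_{\omega_{t_0,T}}\xi^3|z|^2e^{-2s\alpha}+C\|Y(T_0,\cdot)\|_{\mathbb{H}^2}^2$; fixing first $\lambda=\lambda_1$ and then $s=s_1$ large enough that the coefficient of $P$ is $<\tfrac12$, one absorbs $P$ and obtains $P\le Cs^3\lambda^4\int_{\omega_{t_0,T}}\xi^3|z|^2e^{-2s\alpha}+C\|Y(T_0,\cdot)\|_{\mathbb{H}^2}^2$. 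With $s,\lambda$ now fixed, $\xi^3e^{-2s\alpha}$ is bounded on $\overline{\omega_{t_0,T}}$, so the first term is $\le C\|\partial_ty\|_{L^2(\omega_{t_0,T})}^2$; together with $\|F(T_0,\cdot)\|_{L^2(\Omega)}^2+\|G(T_0,\cdot)\|_{L^2(\Gamma)}^2\le CP$ and the pointwise bounds $|F(t,x)|\le(1+C_0T)|F(T_0,x)|$, $|G(t,x)|\le(1+C_0T)|G(T_0,x)|$ (obtained by integrating \eqref{eqas} in time), this produces $\|(F,G)\|_{\mathbb{L}^2_T}^2\le C(\|Y(T_0,\cdot)\|_{\mathbb{H}^2}^2+\|\partial_ty\|_{L^2(\omega_{t_0,T})}^2)$, i.e. \eqref{eq3.7}. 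I expect the genuine difficulty to be the weighted energy identity of the third paragraph: carrying it out for the coupled bulk/surface system, keeping exact track of the cancellation of the conormal term against the dynamic boundary condition, and verifying that every term it generates is dominated by a strictly smaller portion of $\mathcal{L}$ (so that it can actually be absorbed, not merely compared).
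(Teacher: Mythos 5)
Your proposal is correct and follows the same Bukhgeim--Klibanov/Imanuvilov--Yamamoto architecture as the paper (differentiate in time, apply \eqref{car1} on $(t_0,T)$ to $\partial_t Y$, use the admissibility bound to replace $F_t,G_t$ by $F(T_0,\cdot),G(T_0,\cdot)$, recover $F(T_0,\cdot),G(T_0,\cdot)$ from the equation at $t=T_0$ modulo $\|Y(T_0,\cdot)\|_{\mathbb{H}^2}$, absorb, and extend to all $t$ via \eqref{eqas}), but it diverges from the paper in the two places where the smallness needed for absorption is produced. First, you gain a factor $s^{-1/2}$ on the source side by a Laplace-type bound $\int_{t_0}^{T}e^{-2s\alpha(t,x)}\,\d t\le Cs^{-1/2}e^{-2s\alpha(T_0,x)}$; this is valid (since $t\mapsto\alpha(t,x)$ has a nondegenerate minimum at $T_0$ uniformly in $x$), but the paper settles for the cruder $\alpha(t,x)\ge\alpha(T_0,x)$, which only yields the constant $T-t_0$, and instead extracts the factor $C/s$ from the next step. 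Second, and more substantially, for the quantity you call $Q$ the paper does \emph{not} substitute the PDE and perform the weighted energy identity you describe: it writes $\partial_t\bigl(z^2e^{-2s\alpha}\bigr)=(2z\,\partial_t z-2s\,\partial_t\alpha\,z^2)e^{-2s\alpha}$, integrates from $t_0$ to $T_0$ (the weight vanishes as $t\to t_0^+$, so no averaging over $\tau$ is needed), and applies Young's inequality $2|z||\partial_t z|\le \tfrac{1}{s^2\xi}|\partial_t z|^2+s^2\xi|z|^2$; both resulting terms are exactly $s^{-1}$ times terms already present on the left-hand side of \eqref{car1} (note the $\tfrac{1}{s\xi}|\partial_t z|^2$ there), so $Q\le \tfrac{C}{s}\mathcal{L}$ with no integration by parts in $x$ at all. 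This observation dissolves what you flag at the end as the ``genuine difficulty'': the cancellation of the conormal term against the dynamic boundary condition, the sign of the surface diffusion term, and the absorption bookkeeping are all unnecessary for this theorem (they are, in effect, already packaged inside the Carleman estimate). Your route should still close --- the cancellations you describe are the right ones and every generated term is absorbable --- but it re-proves a weighted energy estimate that \eqref{car1} already hands you for free.
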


\begin{proof}
Following Remark \ref{rmq2.13}, we may apply Carleman estimate \eqref{car1} on the interval $(t_0, T)$ instead of $(0,T)$. Throughout the proof, $C$ will denote a generic constant which is independent of $Y$. It may vary even from line to line.
The terms appearing in \eqref{eq3.7} are well defined, indeed, as mentioned in Section \ref{sec2}, we have then $Y:=(y,y_{\Gamma})\in \mathbb{E}_1(t_0, T)$.
The functions $z=\partial_t y$ and $z_\Gamma=\partial_t y_\Gamma$, where $(y,y_\Gamma)$ is the solution of \eqref{eq1to4}, are solutions of the system
\begin{small}
\begin{empheq}[left = \empheqlbrace]{alignat=2}
\begin{aligned}
&\partial_t z - \dv(A(x) \nabla z) + B(x)\cdot \nabla z + p(x)z = F_t(t,x) &\quad\text{in } \Omega_T , \\
&\partial_t z_{\Gamma} - \dv_{\Gamma} (D(x)\nabla_\Gamma z_{\Gamma}) +\partial_{\nu}^A z + \langle b(x), \nabla_\Gamma z_{\Gamma} \rangle_{\Gamma} + q(x)z_{\Gamma} = G_t(t,x) &\quad\text{on } \Gamma_T, \\
&z_{\Gamma}(t,x) = z\rvert_{\Gamma}(t,x) &\quad\text{on } \Gamma_T, \label{eq3.8to3.10}
\end{aligned}
\end{empheq}
\end{small}
and we have
\begin{small}
\begin{empheq}[left = \empheqlbrace]{alignat=2}
& z(T_0) - \dv(A\nabla y(T_0)) + B\cdot \nabla y(T_0) + p y(T_0) = F(T_0) \label{eq3.11}\\
& z_{\Gamma}(T_0) -\dv_{\Gamma} (D\nabla_\Gamma y_{\Gamma}(T_0)) + \partial_{\nu}^A y(T_0) + \langle b, \nabla_\Gamma y_{\Gamma}(T_0)\rangle_{\Gamma} + q y_{\Gamma}(T_0)= G(T_0).\label{eq3.12}
\end{empheq}
\end{small}
Since $(F,G)\in H^1(0,T; \mathbb{L}^2)$, by Proposition \ref{prop2} we have $(z, z_\Gamma) \in \mathbb{E}_1(t_0, T)$. Hence, we may apply Carleman estimate to \eqref{eq3.8to3.10} to obtain
\begin{small}
\begin{align}
& \int_{\Omega_{t_0,T}} \left(\frac{1}{s\xi} |\partial_t z|^2 + s^3\lambda^4 \xi^3 |z|^2 \right)e^{-2s\alpha} \d x \d t + \int_{\Gamma_{t_0,T}} \left(\frac{1}{s\xi} |\partial_t z_\Gamma|^2 + s^3\lambda^3\xi^3 |z_\Gamma|^2 \right)e^{-2s\alpha} \d S \d t \nonumber\\
& \quad\leq C s^3\lambda^4\int_{\omega_{t_0,T}} e^{-2s\alpha} \xi^3|z|^2 \d x \d t +C \int_{\Omega_{t_0,T}} e^{-2s\alpha}|F_t|^2 \d x \d t + C \int_{\Gamma_{t_0,T}} e^{-2s\alpha} |G_t|^2 \d S \d t, \label{eqq3.12}
\end{align}
\end{small}
for any $s>0$ large enough.
Since $\mathcal{F}=(F,G)\in \mathcal{S}(C_0)$,  we have
\begin{small}
\begin{align}
& \int_{\Omega_{t_0,T}} \left(\frac{1}{s\xi} |\partial_t z|^2 + s^3\lambda^4\xi^3 |z|^2 \right)e^{-2s\alpha} \d x \d t + \int_{\Gamma_{t_0,T}} \left(\frac{1}{s\xi} |\partial_t z_\Gamma|^2 + s^3\lambda^3\xi^3 |z_\Gamma|^2 \right)e^{-2s\alpha} \d S \d t \nonumber\\
& \quad\leq  C s^3\lambda^4\int_{\omega_{t_0,T}} e^{-2s\alpha} \xi^3|z|^2 \,\d x\,\d t +C \int_{\Omega_{t_0,T}} e^{-2s\alpha}\left|F\left(T_0,x\right)\right|^2 \,\d x\,\d t\label{eq3.13}\\
& \qquad + C \int_{\Gamma_{t_0,T}} e^{-2s\alpha} \left|G\left(T_0,x\right)\right|^2 \,\d S\,\d t. \nonumber
\end{align}
\end{small}
From \eqref{eq3.11}-\eqref{eq3.12}, to estimate the term
\begin{equation*}
\int_\Omega \left|F\left(T_0,x\right)\right|^2 e^{-2s\alpha\left(T_0,x\right)} \,\d x + \int_\Gamma \left|G\left(T_0,x\right)\right|^2 e^{-2s\alpha\left(T_0,x\right)} \,\d S,
\end{equation*}
we have to estimate the term
\begin{equation*}
\int_\Omega \left|z\left(T_0,x\right)\right|^2 e^{-2s\alpha\left(T_0,x\right)} \,\d x + \int_\Gamma \left|z_\Gamma\left(T_0,x\right)\right|^2 e^{-2s\alpha\left(T_0,x\right)} \,\d S.
\end{equation*}
Fix $x\in \Omega$ and take $\displaystyle H(t)=\int_\Omega \left|z\left(t,x\right)\right|^2 e^{-2s\alpha(t,x)} \,\d x$, for $t\in (0,T)$.\\
Since $\partial_t (z^2 e^{-2s\alpha})=(2\partial_t z\, z-2s\partial_t\alpha\, z^2)e^{-2s\alpha}\in L^2(\Omega_T)$ and $\lim\limits_{t\to t_0} e^{-2s\alpha(t,x)}=0$ for $x$ in $\overline{\Omega}$, we can differentiate $H$ under the integral sign. We further have
\begin{equation*}
\int_\Omega \left|z\left(T_0,x\right)\right|^2 e^{-2s\alpha\left(T_0,x\right)} \,\d x = \int_{t_0}^{T_0} \frac{\partial}{\partial t} \left(\int_\Omega \left|z\left(t,x\right)\right|^2 e^{-2s\alpha(t,x)} \,\d x\right)\,\d t
\end{equation*}
\begin{equation*}
\begin{aligned}
&=& \int_{t_0}^{T_0} \int_\Omega \left(2\partial_t y(t,x)\partial_t^2 y(t,x)- 2s(\partial_t \alpha)|\partial_t y(t,x)|^2 \right) e^{-2s\alpha} \,\d x\,\d t\\
&\leq& \int_{\Omega_{t_0,T}} \left(2|\partial_t y(t,x)|\,|\partial_t^2 y(t,x)| +Cs\xi^2 |\partial_t y(t,x)|^2 \right) e^{-2s\alpha} \,\d x\,\d t,
\end{aligned}
\end{equation*}
where we employed $|\partial_t \alpha| \leq C\xi^2$. On the other hand, we have
\begin{align*}
2|\partial_t y(t,x)|\,|\partial_t^2 y(t,x)| &= 2\frac{1}{s\sqrt{\xi}} |\partial_t^2 y(t,x)|s\sqrt{\xi}|\partial_t y(t,x)|\\
&\leq \frac{1}{s^2\xi} |\partial_t^2 y(t,x)|^2 + s^2\xi |\partial_t y(t,x)|^2\\
&\leq C\left(\frac{1}{s^2\xi} |\partial_t^2 y(t,x)|^2 + s^2 \lambda^4\xi^3 |\partial_t y(t,x)|^2\right),
\end{align*}
for large $\lambda$, using $\xi\leq C\xi^2$.
Hence, 
\begin{equation}
\int_\Omega \left|z\left(T_0,x\right)\right|^2 e^{-2s\alpha\left(T_0,x\right)} \,\d x \leq C\int_{\Omega_{t_0,T}} \left(\frac{1}{s^2\xi} |\partial_t^2 y|^2 + s^2\lambda^4\xi^3 |\partial_t y|^2 \right)e^{-2s\alpha} \,\d x\,\d t. \label{E1}
\end{equation}
Similarly, we have
\begin{align*}
2|\partial_t y_\Gamma(t,x)|\,|\partial_t^2 y_\Gamma(t,x)| &= 2\frac{1}{s\sqrt{\xi}} |\partial_t^2 y_\Gamma(t,x)|s\sqrt{\xi}|\partial_t y_\Gamma(t,x)|\\
&\leq \frac{1}{s^2\xi} |\partial_t^2 y_\Gamma(t,x)|^2 + s^2\xi |\partial_t y_\Gamma(t,x)|^2\\
&\leq C\left(\frac{1}{s^2\xi} |\partial_t^2 y_\Gamma(t,x)|^2 + s^2 \lambda^3\xi^3 |\partial_t y_\Gamma(t,x)|^2\right),
\end{align*}
and
\begin{equation}
\int_\Gamma \left|z_\Gamma\left(T_0,x\right)\right|^2 e^{-2s\alpha\left(T_0,x\right)} \,\d S \leq C\int_{\Gamma_{t_0,T}} \left(\frac{1}{s^2\xi} |\partial_t^2 y_\Gamma|^2 + s^2\lambda^3\xi^3 |\partial_t y_\Gamma|^2 \right)e^{-2s\alpha} \,\d S\,\d t, \label{E2}
\end{equation}
for large $\lambda$.
Adding inequalities \eqref{E1}, \;\eqref{E2} and applying \eqref{eq3.13}, we obtain
\begin{align}
& \int_\Omega \left|z\left(T_0,x\right)\right|^2 e^{-2s\alpha\left(T_0,x\right)} \,\d x + \int_\Gamma \left|z_\Gamma\left(T_0,x\right)\right|^2 e^{-2s\alpha\left(T_0,x\right)} \,\d S \nonumber\\
& \leq C\int_{\Omega_{t_0,T}} \left(\frac{1}{s^2\xi} |\partial_t^2 y|^2 + s^2\lambda^4\xi^3 |\partial_t y|^2 \right)e^{-2s\alpha} \,\d x\,\d t \nonumber\\
& \quad + C\int_{\Gamma_{t_0,T}} \left(\frac{1}{s^2\xi} |\partial_t^2 y_\Gamma|^2 + s^2\lambda^3\xi^3 |\partial_t y_\Gamma|^2 \right)e^{-2s\alpha} \,\d S\,\d t \nonumber\\
& \leq \frac{C}{s} \int_{\Omega_{t_0,T}} e^{-2s\alpha} \left|F\left(T_0,x\right)\right|^2 \,\d x\,\d t + \frac{C}{s} \int_{\Gamma_{t_0,T}} e^{-2s\alpha} \left|G\left(T_0,x\right)\right|^2 \,\d S\,\d t \nonumber\\
& \quad + \,Cs^2\lambda^4 \int_{\omega_{t_0,T}} e^{-2s\alpha}\xi^3 |\partial_t y|^2 \,\d x\,\d t. \label{EE2}
\end{align}
Since the coefficients of $A$ are bounded, $B\in L^\infty(\Omega)^N$ and $p\in L^\infty(\Omega)$, we obtain
\begin{equation}
\begin{aligned} 
&\int_\Omega \left(\left|\dv(A\nabla y\left(T_0,\cdot\right))\right|^2 + \left|B\cdot\nabla y\left(T_0,\cdot\right)\right|^2 + p^2 \left|y\left(T_0,\cdot\right)\right|^2\right) e^{-2s\alpha\left(T_0,\cdot\right)} \,\d x & \\
& \qquad \leq C \left\Vert y\left(T_0,\cdot\right)\right\Vert_{H^2(\Omega)}^2 . \label{EE1}
\end{aligned}
\end{equation}
Analogously, we have
\begin{align} 
& \int_\Gamma \left(\left|\dv_\Gamma (D \nabla_\Gamma y_\Gamma\left(T_0,\cdot\right))\right|^2 + \left|\langle b, \nabla_\Gamma y\left(T_0,\cdot\right) \rangle_\Gamma\right|^2 + q^2 \left|y_\Gamma\left(T_0,\cdot\right)\right|^2\right) e^{-2s\alpha\left(T_0,\cdot\right)} \,\d S \nonumber\\
& \qquad + \int_\Gamma \left|\partial_\nu^A y\left(T_0,x\right)\right|^2 e^{-2s\alpha\left(T_0,x\right)}\d S \nonumber\\
& \quad \leq C \left(\left\Vert y_\Gamma\left(T_0,\cdot\right)\right\Vert_{H^2(\Gamma)}^2 + \left\Vert \partial_\nu^A y\left(T_0,\cdot\right)\right\Vert_{L^2(\Gamma)}^2 \right) \nonumber\\
& \quad \leq C \left(\left\Vert y_\Gamma\left(T_0,\cdot\right)\right\Vert_{H^2(\Gamma)}^2 + \left\Vert y\left(T_0,\cdot\right)\right\Vert_{H^2(\Omega)}^2 \right), \label{EEE2}
\end{align}
using  $\|\partial_\nu^A y(T_0,\cdot)\|_{L^2(\Gamma)} \leq C\|y(T_0,\cdot)\|_{H^2(\Omega)}$, for some positive constant $C>0$, which holds by trace theorem since $A\in C(\overline{\Omega}; \mathbb{R}^{N\times N})$ (see Chapter 1, Theorem 8.3 in \cite{LM'72}).
Combining estimates \eqref{EE1} and \eqref{EEE2}, we obtain
\begin{small}
\begin{align}
& \int_\Omega \left(\left|\dv(A\nabla y\left(T_0,\cdot\right))\right|^2 + \left|B\cdot\nabla y\left(T_0,\cdot\right)\right|^2 + p^2 \left|y\left(T_0,\cdot\right)\right|^2\right) e^{-2s\alpha\left(T_0,\cdot\right)} \,\d x \\
& \quad + \int_\Gamma \left|\partial_\nu^A y\left(T_0,x\right)\right|^2 e^{-2s\alpha\left(T_0,x\right)} \,\d S \nonumber\\
& \quad + \int_\Gamma \left(\left|\dv_\Gamma (D \nabla_\Gamma y_\Gamma\left(T_0,\cdot\right))\right|^2 + \left|\langle b, \nabla_\Gamma y\left(T_0,\cdot\right) \rangle_\Gamma\right|^2 + q^2 \left|y_\Gamma\left(T_0,\cdot\right)\right|^2\right) e^{-2s\alpha\left(T_0,\cdot\right)} \,\d S \nonumber\\
& \quad\leq C \left(\left\Vert y\left(T_0,\cdot\right)\right\Vert_{H^2(\Omega)}^2 + \left\Vert y_\Gamma\left(T_0,\cdot\right)\right\Vert_{H^2(\Gamma)}^2\right)= C\left\Vert Y\left(T_0,\cdot\right)\right\Vert_{\mathbb{H}^2}^2 .\nonumber
\end{align}
\end{small}
Using \eqref{EE2} and \eqref{eq3.11}-\eqref{eq3.12},  we deduce
\begin{align}
& \int_\Omega e^{-2s\alpha\left(T_0,x\right)} \left|F\left(T_0,x\right)\right|^2 \,\d x + \int_\Gamma e^{-2s\alpha\left(T_0,x\right)} \left|G\left(T_0,x\right)\right|^2 \,\d S \nonumber\\
& \leq \frac{C}{s} \int_{\Omega_{t_0,T}} e^{-2s\alpha(t,x)} \left|F\left(T_0,x\right)\right|^2 \,\d x\,\d t + \frac{C}{s} \int_{\Gamma_{t_0,T}} e^{-2s\alpha(t,x)} \left|G\left(T_0,x\right)\right|^2 \,\d S\,\d t \nonumber\\
& \quad + \,C s^2\lambda^4 \int_{\omega_{t_0,T}} e^{-2s\alpha(t,x)} \xi^3 |\partial_t y|^2 \,\d x\,\d t +  C\left\Vert Y\left(T_0,\cdot\right)\right\Vert_{\mathbb{H}^2}^2 .\label{EE3}
\end{align}
Since $\alpha(t,x)\geq \alpha(T_0,x)$, for all $(t,x)\in \overline{\Omega}_{t_0, T}$, we have 
\begin{align}
&\int_{\Omega_{t_0,T}} e^{-2s\alpha(t,x)} \left|F\left(T_0,x\right)\right|^2 \,\d x\,\d t + \int_{\Gamma_{t_0,T}} e^{-2s\alpha(t,x)} \left|G\left(T_0,x\right)\right|^2 \,\d S\,\d t \nonumber\\
& \leq (T-t_0) \left(\int_\Omega e^{-2s\alpha\left(T_0,x\right)} \left|F\left(T_0,x\right)\right|^2 \,\d x + \int_\Gamma e^{-2s\alpha\left(T_0,x\right)} \left|G\left(T_0,x\right)\right|^2 \,\d S \right). \label{EE4}
\end{align}
From \eqref{EE3} and \eqref{EE4},  we obtain
\begin{align*}
&\int_{\Omega} e^{-2s\alpha\left(T_0,x\right)} \left|F\left(T_0,x\right)\right|^2 \,\d x + \int_{\Gamma} e^{-2s\alpha\left(T_0,x\right)} \left|G\left(T_0,x\right)\right|^2 \,\d S \nonumber\\
& \leq \frac{C}{s} \left(\int_{\Omega} e^{-2s\alpha\left(T_0,x\right)} \left|F\left(T_0,x\right)\right|^2 \,\d x + \int_{\Gamma} e^{-2s\alpha\left(T_0,x\right)} \left|G\left(T_0,x\right)\right|^2 \,\d S\right)\\
& \qquad + C\left\Vert Y\left(T_0,\cdot\right)\right\Vert_{\mathbb{H}^2}^2 + C s^2\lambda^4 \int_{\omega_{t_0,T}} e^{-2s\alpha(t,x)}\xi^3 |\partial_t y|^2 \,\d x\,\d t,
\end{align*}
and then
\begin{align}
&\left(1-\frac{C}{s}\right)\left(\int_{\Omega} e^{-2s\alpha\left(T_0,x\right)} \left|F\left(T_0,x\right)\right|^2 \,\d x + \int_{\Gamma} e^{-2s\alpha\left(T_0,x\right)} \left|G\left(T_0,x\right)\right|^2 \,\d S \right)\nonumber\\
& \qquad\leq C\left\Vert Y\left(T_0,\cdot\right)\right\Vert_{\mathbb{H}^2}^2 + C s^2\lambda^4 \int_{\omega_{t_0,T}} e^{-2s\alpha(t,x)}\xi^3 |\partial_t y|^2 \,\d x\,\d t. \label{EE5}
\end{align}
Since $\mathcal{F}=(F,G)\in \mathcal{S}(C_0)$, depending on $t\geq T_0$ or $\displaystyle t\leq T_0$, we have
\begin{align}
|F(t,x)|\leq \left|F\left(T_0,x\right)\right|+ \left|\int_{T_0}^t F_\tau(\tau,x) \,\d \tau \right| \leq C\left|F\left(T_0,x\right)\right|, \quad \forall (t,x)\in \Omega_T , \label{EE6}\\
|G(t,x)|\leq \left|G\left(T_0,x\right)\right|+ \left|\int_{T_0}^{t} G_\tau(\tau,x) \,\d \tau \right| \leq C\left|G\left(T_0,x\right)\right|, \quad \forall (t,x)\in \Gamma_T .\label{EE7}
\end{align}
Using \eqref{EE6}-\eqref{EE7} with \eqref{EE5}, we derive
\begin{align*}
&\left(1-\frac{C}{s}\right)\left(\int_{\Omega_{T}} e^{-2s\alpha\left(T_0,x\right)} |F(t,x)|^2 \,\d x\,\d t + \int_{\Gamma_{T}} e^{-2s\alpha\left(T_0,x\right)} |G(t,x)|^2 \,\d S\,\d t \right)\nonumber\\
& \qquad\leq C\left\Vert Y\left(T_0,\cdot\right)\right\Vert_{\mathbb{H}^2}^2 + C s^2\lambda^4 \int_{\omega_{t_0,T}} e^{-2s\alpha(t,x)}\xi^3 |\partial_t y|^2 \,\d x\,\d t.
\end{align*}
The functions $x \mapsto e^{-2s\alpha\left(T_0,x\right)}$ and $(t,x)\mapsto e^{-2s\alpha(t,x)}\xi^3(t,x)$ are bounded on $\Omega$ and $\Omega_{t_0,T}$, respectively. Fixing $\lambda,s>0$ sufficiently large, we obtain
\begin{align*}
\|(F,G)\|_{\mathbb{L}^2_{T}}^2 \leq C\left(\left\Vert Y\left(T_0,\cdot\right)\right\Vert_{\mathbb{H}^2}^2 + \|\partial_t y\|_{L^2(\omega_{t_0,T})}^2\right).
\end{align*}
Thus, the proof of Theorem \ref{thm1} is complete.
\end{proof}

\begin{remark}
We emphasize that in our inverse problem, the cases $T_0=0$ or $T_0 =T$, where $T_0$ is the time of observation, are not considered. In fact, the weight functions used in Carleman estimate blow up as $t \to 0$ and as $t \to T$, and in the proof we used the boundedness of these functions away from $0$ and $T$. Then, it would be of much interest to prove similar results in these cases.
\end{remark}

\subsection{Uniqueness and stability in a particular case}
A particular but interesting case of inverse source problems is when the source terms in \eqref{eq1to4} are given by
\begin{align}
F(t,x) &=f(x)r(t,x), \quad \text{ for all } (t,x)\in \Omega_T, \label{Eqq5}\\
G(t,x) &=g(x)\widetilde{r}(t,x), \quad \text{ for all } (t,x)\in \Gamma_T. \label{Eqq6}
\end{align}
Here, the inverse source problem is to determine the couple of $x$-dependent sources $(f,g)$, by means of a single measurement $Y(T_0, \cdot)=(y,y_{\Gamma})\rvert_{t=T_0}$ and a partial observation $y\rvert_{\omega_{t_0,T}}$, provided that the couple of $(t,x)$-dependent functions $(r,\widetilde{r})$ belonging to $\mathcal{C}^{1,0}$ are known and satisfying
\begin{align}
r(T_0,x) &\neq 0, \quad x\in \overline{\Omega},\label{Eq5}\\
\widetilde{r}(T_0,x)&\neq 0, \quad x\in \Gamma.\label{Eq6}
\end{align}

\begin{remark}
Under assumptions \eqref{Eq5}-\eqref{Eq6}, one can check that the source term $(F,G)$ in \eqref{Eqq5}-\eqref{Eqq6} belongs to the set of admissible sources $\mathcal{S}(C_0)$, for some positive constant $C_0=C(r,\widetilde{r})$. In fact, by \eqref{Eq5} we have
\begin{equation}
|r(T_0,x)| \geq r_0 >0, \text{ for all } x\in \overline{\Omega}, \label{Imin}
\end{equation}
where $r_0 =\min\limits_{\overline{\Omega}} |r(T_0,\cdot)|$. Hence, 
\begin{align*}
|F_t(t,x)| = |f(x)r_t(t,x)| &\leq (\sup_{\overline{\Omega}_T}|r_t|)|f(x)|\\
& \leq \frac{1}{r_0} (\sup_{\overline{\Omega}_T} |r_t|) |f(x)||r(T_0,x)|=C_1 |F(T_0,x)|.
\end{align*}
In a similar way,  we obtain $|G_t(t,x)| \leq C_2 |G(T_0,x)|$.\label{rmk3.4}
\end{remark}
Thus, as a consequence of Theorem \ref{thm1} we have the following.
\begin{proposition}\label{pR1}
Let $Y_0 \in \mathbb{L}^2$. Assume that $(r,\widetilde{r})\in \mathcal{C}^{1,0}$ satisfies \eqref{Eq5}-\eqref{Eq6}. Then, there exists a constant $C=C(\Omega, \omega, T,t_0,C_0,\|B\|_\infty,\|p\|_\infty,\|b\|_\infty, \|q\|_\infty, r, \widetilde{r}) > 0$ such that for all $(f_1,g_1),(f_2,g_2)\in \mathbb{L}^2$,
\begin{equation}
\|(f_1 -f_2, g_1 -g_2)\|_{\mathbb{L}^2} \leq C\left(\|Y_{1}(T_0,\cdot) -Y_{2}(T_0,\cdot)\|_{\mathbb{H}^2} + \|\partial_t y_1 - \partial_t y_2\|_{L^2(\omega_{t_0,T})}\right), \label{Isu}
\end{equation}
where $Y_1$ and $Y_2$ are the mild solutions of \eqref{eq1to4} respectively associated to $(f_1 r,g_1 \,\widetilde{r})$ and $(f_2 r,g_2 \,\widetilde{r})$.
\end{proposition}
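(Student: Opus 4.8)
The plan is to obtain Proposition \ref{pR1} as a direct consequence of Theorem \ref{thm1}, by linearity of \eqref{eq1to4}, once two points have been settled: that the relevant source term is admissible in the sense of \eqref{eqas}, and that the $\mathbb{L}^2$-norm of $(f_1-f_2,g_1-g_2)$ controls from below the $\mathbb{L}^2_T$-norm of $((f_1-f_2)r,(g_1-g_2)\widetilde r)$. First I would set $Y:=Y_1-Y_2=:(y,y_\Gamma)$ with $y:=y_1-y_2$. Since the system is linear and $Y_1,Y_2$ share the same initial datum $Y_0$, the function $Y$ is the mild solution of \eqref{eq1to4} with zero initial datum and source term $\mathcal{F}:=(F,G)=((f_1-f_2)\,r,(g_1-g_2)\,\widetilde r)$. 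Because $(r,\widetilde r)\in\mathcal{C}^{1,0}$ and $(f_1-f_2,g_1-g_2)\in\mathbb{L}^2$, we have $\mathcal{F}\in H^1(0,T;\mathbb{L}^2)$, so Proposition \ref{prop2} guarantees the regularity needed for all the terms in \eqref{Isu} to be well defined.

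Next I would invoke Remark \ref{rmk3.4}: under the nonvanishing assumptions \eqref{Eq5}–\eqref{Eq6}, the source $\mathcal{F}$ belongs to $\mathcal{S}(C_0)$ with $C_0=C(r,\widetilde r)$ depending only on $r$ and $\widetilde r$ (in particular, not on $f_1-f_2$ or $g_1-g_2$); this is the value of $C_0$ referred to in the statement. Theorem \ref{thm1} then applies to $Y$ and gives
\begin{equation*}
\|\mathcal{F}\|_{\mathbb{L}^2_T}\leq C\left(\|Y(T_0,\cdot)\|_{\mathbb{H}^2}+\|\partial_t y\|_{L^2(\omega_{t_0,T})}\right),
\end{equation*}
whose right-hand side is exactly that of \eqref{Isu} since $Y(T_0,\cdot)=Y_1(T_0,\cdot)-Y_2(T_0,\cdot)$ and $\partial_t y=\partial_t y_1-\partial_t y_2$.

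The one point requiring a genuine argument — and the place I expect any difficulty to lie — is the lower bound on $\|\mathcal{F}\|_{\mathbb{L}^2_T}$, which must pass from the pointwise nonvanishing of $r(T_0,\cdot)$ at the single time $T_0$ to a uniform positive lower bound for $|r|$ on a whole time interval. By \eqref{Eq5}, $r_0:=\min_{\overline{\Omega}}|r(T_0,\cdot)|>0$, and since $r$ is continuous on the compact set $[0,T]\times\overline{\Omega}$, uniform continuity yields $\delta>0$ with $[T_0-\delta,T_0+\delta]\subset(0,T)$ and $|r(t,x)|\geq r_0/2$ for all $(t,x)\in[T_0-\delta,T_0+\delta]\times\overline{\Omega}$; restricting the time integration to this subinterval gives
\begin{equation*}
\|F\|_{L^2(\Omega_T)}^2\geq\int_{T_0-\delta}^{T_0+\delta}\!\int_\Omega|f_1-f_2|^2|r(t,x)|^2\,\d x\,\d t\geq\frac{\delta r_0^2}{2}\,\|f_1-f_2\|_{L^2(\Omega)}^2,
\end{equation*}
and the analogous estimate on $\Gamma_T$ follows from \eqref{Eq6}. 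Hence $\|\mathcal{F}\|_{\mathbb{L}^2_T}^2\geq c\,\|(f_1-f_2,g_1-g_2)\|_{\mathbb{L}^2}^2$ for some $c=c(r,\widetilde r)>0$, and combining this with the estimate from Theorem \ref{thm1} produces \eqref{Isu}. No serious obstacle is anticipated beyond this: the result is essentially a corollary, the only subtlety being the compactness/uniform-continuity step just described; uniqueness ($Y_1(T_0,\cdot)=Y_2(T_0,\cdot)$ and $\partial_t y_1=\partial_t y_2$ on $\omega_{t_0,T}$ imply $f_1=f_2$, $g_1=g_2$) is then immediate from \eqref{Isu}.
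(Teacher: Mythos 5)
Your proposal is correct and follows essentially the same route as the paper's own proof: reduce to Theorem \ref{thm1} via the admissibility observation of Remark \ref{rmk3.4}, then use uniform continuity of $r$ and $\widetilde r$ on compact sets to get a uniform lower bound $|r|,|\widetilde r|\geq r_0/2$ on a time interval around $T_0$, and restrict the time integration there to bound $\|(f_1-f_2,g_1-g_2)\|_{\mathbb{L}^2}$ by $\|((f_1-f_2)r,(g_1-g_2)\widetilde r)\|_{\mathbb{L}^2_T}$. The only difference is cosmetic: you explicitly record the $H^1(0,T;\mathbb{L}^2)$ regularity of the source and the resulting well-definedness of the terms in \eqref{Isu}, which the paper leaves implicit at this stage.
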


\begin{remark}
If the known parts $r$ and $\widetilde{r}$ of the source terms in \eqref{Eqq5}-\eqref{Eqq6} are assumed to be positive as in \cite[Theorem III.2.]{Va'11}, Proposition \ref{pR1} follows directly from Theorem \ref{thm1}. In our case, we assume that $r$ and $\widetilde{r}$ are only positive on $T_0$ which makes the situation a bit more complicated.
\end{remark}

\begin{proof}
Let $f=f_1 -f_2$, $g=g_1-g_2$ and $Y=Y_1-Y_2$ the corresponding solution. By Remark \ref{rmk3.4} we have $\mathcal{F}=(f r,g\widetilde{r}) \in \mathcal{S}(C_0)$, for some $C_0=C(r,\widetilde{r})>0$. Hence, Theorem \ref{thm1} implies that there exists $C=C(\Omega, \omega, T,t_0,\|a\|_\infty,\|b\|_\infty, r, \widetilde{r}) > 0$ such that
\begin{equation}
\|(f r, g \,\widetilde{r})\|_{\mathbb{L}^2_T} \leq C\left(\|Y(T_0,\cdot)\|_{\mathbb{H}^2} + \|\partial_t y_1 \|_{\omega_{t_0,T}}\right). \label{Ifr}
\end{equation}
To obtain \eqref{Isu}, it suffices to prove that there exist $c_0>0$ and $\tau>0$ such that $(T_0-\tau,T_0+\tau)\subset (0,T)$ and,
$$\forall (t,x)\in (T_0-\tau,T_0+\tau)\times \overline{\Omega}\colon \; |r(t,x)|\geq c_0 >0,$$
$$\forall (t,x)\in (T_0-\tau,T_0+\tau)\times \Gamma \colon \; |\widetilde{r}(t,x)|\geq c_0 >0.$$
By \eqref{Imin} we have $|r(T_0,\cdot)|\geq r_0$ on $\overline{\Omega}$. Using the uniform continuity of $|r|$ on $[0,T]\times \overline{\Omega}$, there exists $\tau_1>0$ such that
$$|t_1-t_2|+\|x_1-x_2\|_1<\tau_1 \text{ implies }\displaystyle |r(t_2,x_2)|-\frac{r_0}{2} <|r(t_1,x_1)|.$$ 
We can choose $\tau$ such that $(T_0-\tau_1,T_0+\tau_1)\subset (0,T)$. It follows that,  for all $(t,x)\in (T_0-\tau_1,T_0+\tau_1)\times \overline{\Omega}$, we have
$$|r(t,x)|>|r(T_0,x)|-\frac{r_0}{2}\geq \frac{r_0}{2}.$$
Similarly, there exists $\tau_2>0$ such that $(T_0-\tau_2,T_0+\tau_2)\subset (0,T)$ and,  for all $(t,x)\in (T_0-\tau_2,T_0+\tau_2)\times \Gamma$,  we have $\displaystyle |\widetilde{r}(t,x)|\geq \frac{r_0}{2}$.
Let $\tau=\min(\tau_1,\tau_2)$ and $\displaystyle c_0=\frac{r_0}{2}$. Then,
$$\forall (t,x)\in (T_0-\tau,T_0+\tau)\times \overline{\Omega}\colon \; |r(t,x)|\geq c_0 >0,$$
$$\forall (t,x)\in (T_0-\tau,T_0+\tau)\times \Gamma \colon \; |\widetilde{r}(t,x)|\geq c_0 >0.$$
Furthermore,
$$\forall (t,x)\in (T_0-\tau,T_0+\tau)\times \overline{\Omega}\colon \; |f(x)|\leq \frac{1}{c_0} |f(x)| |r(t,x)|,$$
$$\forall (t,x)\in (T_0-\tau,T_0+\tau)\times \Gamma \colon \; |g(x)|\leq \frac{1}{c_0} |g(x)| |\widetilde{r}(t,x)|.$$
By integrating the previous inequalities we obtain,
\begin{align*}
\|(f,g)\|_{\mathbb{L}^2}^2 &=\int_\Omega |f(x)|^2 \,\d x + \int_\Gamma |g(x)|^2 \,\d S \\
& \leq \frac{1}{2 \tau c_0^2}\left( \int_{T_0-\tau}^{T_0+\tau} \int_\Omega |f(x)r(t,x)|^2 \,\d x\,\d t+ \int_{T_0-\tau}^{T_0+\tau} \int_\Gamma |g(x)\widetilde{r}(t,x)|^2 \,\d S\,\d t\right)\\
& \leq \frac{1}{2 \tau c_0^2}\left(\int_{\Omega_T} |f(x)r(t,x)|^2 \,\d x\,\d t+ \int_{\Gamma_T} |g(x)\widetilde{r}(t,x)|^2 \,\d S\,\d t\right)\\
&=\frac{1}{2 \tau c_0^2}  \|(f r, g \,\widetilde{r})\|_{\mathbb{L}^2_T}^2.
\end{align*}
Inequality \eqref{Ifr} allows to conclude.
\end{proof}

As an application of Proposition \ref{pR1}, we derive the following uniqueness result.
\begin{corollary}
Assume that $Y_1:=(y_1,y_{1,\Gamma})$ and $Y_2 :=(y_2,y_{2,\Gamma})$ are the mild solutions
of \eqref{eq1to4} respectively associated to $\mathcal{F}_1:=(f_1 r, g_1 \widetilde{r})$ and $\mathcal{F}_2:=(f_2 r, g_2 \widetilde{r})$, where $(r, \widetilde{r})\in \mathcal{C}^{1,0}$ satisfying \eqref{Eq5}-\eqref{Eq6}. If $Y_{1}(T_0,\cdot) =Y_{2}(T_0,\cdot)$ and $\partial_t y_1 = \partial_t y_2$ in $\omega_{t_0,T}$, then $f_1\equiv f_2$ in $\Omega$ and $g_1\equiv g_2$ in $\Gamma$.
\end{corollary}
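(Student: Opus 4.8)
The plan is to deduce this uniqueness statement as an immediate consequence of the Lipschitz stability estimate of Proposition \ref{pR1}, with no new work required. First I would observe that, since $f_1-f_2\in L^2(\Omega)$ and $g_1-g_2\in L^2(\Gamma)$, the couple $(f_1-f_2,g_1-g_2)$ belongs to $\mathbb{L}^2$, so the hypotheses of Proposition \ref{pR1} are met. By linearity of the system \eqref{eq1to4}, the difference $Y:=Y_1-Y_2$ is precisely the mild solution associated to the source $((f_1-f_2)r,(g_1-g_2)\widetilde{r})$, and the positivity conditions \eqref{Eq5}-\eqref{Eq6} on $(r,\widetilde{r})\in\mathcal{C}^{1,0}$ are exactly those under which \eqref{Isu} holds.

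Next I would substitute the vanishing data into the right-hand side of \eqref{Isu}. The assumption $Y_1(T_0,\cdot)=Y_2(T_0,\cdot)$ forces $\|Y_1(T_0,\cdot)-Y_2(T_0,\cdot)\|_{\mathbb{H}^2}=0$, and the assumption $\partial_t y_1=\partial_t y_2$ in $\omega_{t_0,T}$ forces $\|\partial_t y_1-\partial_t y_2\|_{L^2(\omega_{t_0,T})}=0$. Hence \eqref{Isu} gives $\|(f_1-f_2,g_1-g_2)\|_{\mathbb{L}^2}\le 0$, so that $\|f_1-f_2\|_{L^2(\Omega)}=0$ and $\|g_1-g_2\|_{L^2(\Gamma)}=0$. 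I would then conclude $f_1\equiv f_2$ a.e.\ in $\Omega$ and $g_1\equiv g_2$ a.e.\ on $\Gamma$, which is the claim.

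There is essentially no obstacle: the corollary is a formal consequence of Proposition \ref{pR1}, the only points worth a sentence being the use of linearity of \eqref{eq1to4} to identify $Y_1-Y_2$ with the solution driven by the difference of the sources, and the remark that the admissibility of $((f_1-f_2)r,(g_1-g_2)\widetilde{r})$ in $\mathcal{S}(C_0)$ was already checked in Remark \ref{rmk3.4}. One could alternatively bypass the stability estimate and rerun the Carleman argument of Theorem \ref{thm1} directly for $Y=Y_1-Y_2$, but that route is strictly longer and yields nothing beyond what Proposition \ref{pR1} already provides.
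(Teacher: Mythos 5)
Your argument is correct and is exactly how the paper obtains the corollary: it is stated as an immediate application of Proposition \ref{pR1}, with the vanishing observations forcing the right-hand side of \eqref{Isu} to be zero and hence $f_1=f_2$ a.e.\ in $\Omega$ and $g_1=g_2$ a.e.\ on $\Gamma$. No further comment is needed.
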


\begin{remark}
Here, we dealt with the case when the time of observation $T_0$ is in the interval $(t_0,T)$, since better regularity results hold than $(0,T)$. However, one can obtain the same result when the observation is taken in $(0, t_0)$ but this requires more regularity on the initial conditions as well as source terms.
\end{remark}

%For acknowledgements section, please don't number the section, please begin it with \section*{Acknowledgements}
%\section*{Acknowledgments} We thank the anonymous reviewers for their careful reading of our paper.

% You may incorporate your references as follows in your main tex file.
% Using BibTex is not recommended but can be handled.

\end{document}